\newcommand{\Sp}{\operatorname{Sp}}
\newcommand{\SL}{\operatorname{SL}}
\newcommand{\GL}{\operatorname{GL}}
\newcommand{\SO}{\operatorname{SO}}
\newcommand{\D}{\mathrm{d}}
\newcommand{\g}{\mathfrak{g}}
\newtheorem{lause}{Theorem}[section]
\newtheorem{lemma}[lause]{Lemma}
\newtheorem{seur}[lause]{Corollary}
\newtheorem{prop}[lause]{Proposition}
\newtheorem{prob}[lause]{Problem}
\newtheorem*{lause*}{Theorem}
\theoremstyle{definition}
\theoremstyle{remark}
\newtheorem{remark}[lause]{Remark}
\newtheorem{esim}[lause]{Example} 
\newtheorem*{mot*}{Motivation}
\newtheorem*{acknow*}{Acknowledgements}
\numberwithin{equation}{section}
\begin{document}

\title[Jordan blocks of nilpotent elements]{Jordan blocks of nilpotent elements in some irreducible representations of classical groups in good characteristic}

\author{Mikko Korhonen}
\address{Department of Mathematics, Southern University of Science and Technology, \text{Shenzhen} 518055, Guangdong, P. R. China}
\email{korhonen\_mikko@hotmail.com}
\thanks{Partially supported by SNSF grant P2ELP2\_181902, and NSFC grants 11771200 and 11931005.}

\subjclass[2010]{Primary 20G05}

\date{\today}

\begin{abstract}

Let $G$ be a classical group with natural module $V$ and Lie algebra $\mathfrak{g}$ over an algebraically closed field $K$ of good characteristic. For rational irreducible representations $f: G \rightarrow \operatorname{GL}(W)$ occurring as composition factors of $V \otimes V^*$, $\wedge^2(V)$, and $S^2(V)$, we describe the Jordan normal form of $\mathrm{d} f(e)$ for all nilpotent elements $e \in \mathfrak{g}$. The description is given in terms of the Jordan block sizes of the action of $e$ on $V \otimes V^*$, $\wedge^2(V)$, and $S^2(V)$, for which recursive formulae are known. Our results are in analogue to earlier work (Proc. Amer. Math. Soc., 147 (2019) 4205--4219), where we considered these same representations and described the Jordan normal form of $f(u)$ for every unipotent element $u \in G$. 


\end{abstract}

\maketitle

\section{Introduction}

Let $G$ be a simple algebraic group over an algebraically closed field $K$ of characteristic $p > 0$, with Lie algebra $\g$. Recall that an element $e \in \g$ is \emph{nilpotent}, if $\D f(e)$ is a nilpotent linear map for every rational representation $f: G \rightarrow \GL(W)$. In this paper, we consider some special cases of the following problem.

\begin{prob}\label{prob:mainnilpotent}
Let $f: G \rightarrow \GL(W)$ be a rational irreducible representation and let $e \in \g$ be a nilpotent element. What is the Jordan normal form of $\D f(e)$?
\end{prob}

Let $G$ be a simple classical group ($\SL(V)$, $\Sp(V)$, or $\SO(V)$) and assume that $p$ is \emph{good for $G$}. In other words, we assume $p > 2$ if $G = \Sp(V)$ or $G = \SO(V)$. Let $f: G \rightarrow \GL(W)$ be a rational irreducible representation with highest weight $\lambda$. As the main result of this paper, we describe the Jordan normal form of $\D f(e)$ for every nilpotent element $e \in \g$ in the following cases:

\begin{itemize}
\item $G = \SL(V)$ and $\lambda = \varpi_1 + \varpi_{n-1}$, where $n = \dim V$. (Theorem \ref{thm:nilmainthmA})
\item $G = \Sp(V)$ and $\lambda = \varpi_2$. (Corollary \ref{corollary:nilSPmain})
\item $G = \SO(V)$ and $\lambda = 2 \varpi_1$. (Corollary \ref{corollary:nilSOmain})
\end{itemize}

These irreducible representations are found as composition factors of the tensor product $V \otimes V^*$, the exterior square $\wedge^2(V)$, and the symmetric square $S^2(V)$, respectively. Our results and proofs are analogous to our previous work in \cite{KorhonenJordanGood}, where we considered the Jordan normal form of $f(u)$ for unipotent elements $u \in G$. 

As one application of the results in this paper, we get a description of the Jordan normal form of $\operatorname{ad}(e)$ for every nilpotent element $e \in \g$ when $G$ is a simple algebraic group of type $A_n$ (Remark \ref{remark:adjointmaps}). Combining this results from the literature, the Jordan normal form of $\operatorname{ad} (e)$ is known whenever $G$ is a simple algebraic group in good characteristic (Remark \ref{remark:adjointmaps2} and \cite{StewartNilpotentBlocks}).

Before stating our main results, we establish some basic notation that will be used throughout the paper. Let $T$ be an indeterminate. For $\lambda \in K$ and an integer $d > 0$, denote by $J_d(\lambda)$ an indecomposable $K[T]$-module on which the $T$ acts as a single $d \times d$ Jordan block with eigenvalue $\lambda$. We set $J_0(\lambda) = 0$ for all $\lambda \in K$. For a $K$-vector space $V$ we denote $V^0 = 0$ and $V^n = V \oplus \cdots \oplus V$ ($n$ copies) for $n > 0$. Then if we have $e$ acting nilpotently on a $K$-vector space $V$, notation such as $V \downarrow K[e] = J_{d_1}(0)^{n_1} \oplus \cdots \oplus J_{d_t}(0)^{n_t}$ can be used to say that $e$ acts on $V$ with Jordan blocks of sizes $d_1$, $\ldots$, $d_t$, with block size $d_i$ occurring with multiplicity $n_i$. For a linear map $f: V \rightarrow V$ which is either unipotent or nilpotent, we denote by $r_m(f)$ the number of Jordan blocks of size $m$ in the Jordan decomposition of $f$. We say that $f$ is \emph{regular}, if its Jordan normal form consists of a single Jordan block. We denote by $\nu_p$ the $p$-adic valuation on the integers, so $\nu_p(a)$ is the largest integer $k \geq 0$ such that $p^k$ divides $a$.

Fix a maximal torus $S$ of $G$ with character group $X(S)$, and a base $\Delta$ for the root system of $G$. We denote the dominant weights corresponding to $\Delta$ by $X(S)^+$. The $i$th fundamental dominant weight is denoted by $\varpi_i$, using the standard Bourbaki labeling of the simple roots \cite[11.4, p. 58]{Humphreys}. We denote the rational irreducible $G$-module with highest weight $\lambda \in X(S)^+$ by $L_G(\lambda)$. 

The following theorem is our main result for $G = \SL(V)$, and it will be proven in Section \ref{section:proofsection}. It describes the Jordan normal form of each nilpotent element $e \in \g$ on $L_G(\varpi_1 + \varpi_{n-1})$ in terms of the Jordan normal form of $e$ on $V \otimes V^*$. There is no closed formula for the Jordan block sizes of $e$ on $V \otimes V^*$, but various recursive formulae exist in the literature, which we discuss in Section \ref{section:lietensorblock}. We state the theorem by identifying as $G$-modules $V \otimes V^* \cong \mathfrak{gl}(V)$ and $L_G(\varpi_1 + \varpi_{n-1}) \cong \mathfrak{psl}(V) = \mathfrak{sl}(V) / Z(\mathfrak{sl}(V))$, where $n = \dim V$.

\begin{lause}\label{thm:nilmainthmA}
Let $G = \SL(V)$, where $\dim V = n$ for some $n \geq 2$. Let $e \in \g$ be a nilpotent element and $V \downarrow K[e] = J_{d_1}(0) \oplus \cdots \oplus J_{d_t}(0)$, where $t \geq 1$ and $d_r \geq 1$ for all $1 \leq r \leq t$. Set $\alpha = \nu_p(\gcd(d_1, \ldots, d_t))$. Let $e_0$ be the action of $e$ on $\mathfrak{gl}(V)$, let $e_0'$ be the action of $e$ on $\mathfrak{sl}(V)$, and let $e_0''$ be the action of $e$ on $\mathfrak{psl}(V)$. 

The Jordan block sizes of $e_0'$ are determined from those of $e_0$ as follows:

\begin{enumerate}[\normalfont (i)]
\item If $\alpha = 0$, then $r_1(e_0') = r_1(e_0) - 1$ and $r_m(e_0') = r_m(e_0)$ for all $m \neq 1$.
\item If $\alpha > 0$, then $r_{p^{\alpha}-1}(e_0') = 1$, $r_{p^{\alpha}}(e_0') = r_{p^{\alpha}}(e_0)-1$, and $r_m(e_0') = r_m(e_0)$ for all $m \neq p^{\alpha}, p^{\alpha} - 1$.
\end{enumerate}

The Jordan block sizes of $e_0''$ are determined from those of $e_0$ as follows:

\begin{enumerate}[\normalfont (i)]
\setcounter{enumi}{2}
\item If $p \nmid n$, then $r_1(e_0'') = r_1(e_0) - 1$ and $r_m(e_0'') = r_m(e_0)$ for all $m \neq 1$.
\item If $p \mid n$ and $\alpha = 0$, then $r_1(e_0'') = r_1(e_0) - 2$ and $r_m(e_0'') = r_m(e_0)$ for all $m \neq 1$.
\item If $p \mid n$ and $\alpha > 0$, then $r_{p^{\alpha}}(e_0'') = r_{p^{\alpha}}(e_0) - 2$, $r_{p^{\alpha}-1}(e_0'') = 2$ and $r_m(e_0'') = r_m(e_0)$ for all $m \neq p^{\alpha}, p^{\alpha} - 1$.
\end{enumerate}
\end{lause}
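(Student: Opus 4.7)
The plan is to realize $\mathfrak{sl}(V)$ and $\mathfrak{psl}(V)$ inside $\mathfrak{gl}(V) = V \otimes V^*$ as canonical submodule/quotient constructions, and then read off Jordan types using duality together with a socle-filtration lemma. The key structural ingredients are the exact sequence $0 \to \mathfrak{sl}(V) \to \mathfrak{gl}(V) \xrightarrow{\mathrm{tr}} K \to 0$ of $K[e]$-modules (with $e$ acting as $\mathrm{ad}(e)$) and the invariant trace form $(X,Y)\mapsto \mathrm{tr}(XY)$, under which $\mathfrak{sl}(V)^{\perp} = K\cdot I$; since a $K[e]$-module with $e$ nilpotent is self-dual, this gives $\mathfrak{sl}(V) \cong \mathfrak{pgl}(V)^* \cong \mathfrak{pgl}(V) = \mathfrak{gl}(V)/K\cdot I$. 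I will also invoke the elementary fact that for a nilpotent $K[e]$-module $M$ and $0\neq u\in\ker e$, the quotient $M/Ku$ has the Jordan type of $M$ with one block $J_d(0)$ replaced by $J_{d-1}(0)$, where $d-1 = \max\{k : u\in \mathrm{Im}\,e^k\}$. Splitting of the SES amounts to the existence of a trace-nonzero fixed vector in $\mathfrak{gl}(V)$; every centralizer element decomposes along the block decomposition of $V$ as $\sum c_i I_{V_i}+N$ with $N$ nilpotent, so its trace is $\sum c_i d_i$, and the splitting exists iff some $d_i$ is coprime to $p$, i.e.\ iff $\alpha=0$. This gives (i) immediately.

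For $\alpha>0$ I pass to $\mathfrak{sl}(V)\cong\mathfrak{gl}(V)/K\cdot I$ and compute the invariant $\ell := \max\{k: I\in\mathrm{Im}(\mathrm{ad}\,e)^k\}$, so that the quotient lemma reduces one $J_{\ell+1}$ block to $J_\ell$. The technical heart is the identification $\ell = p^{\alpha}-1$. For the lower bound, using $\binom{p^{\alpha}-1}{j}\equiv(-1)^j\pmod{p}$ one obtains $(\mathrm{ad}\,e)^{p^{\alpha}-1}(X) = (-1)^{p^{\alpha}-1}\sum_{j=0}^{p^{\alpha}-1} e^j X e^{p^{\alpha}-1-j}$; on each Jordan block $V_i$ of $V$ (of size $d_i = p^{\alpha}m_i$ in a basis $v_1^{(i)},\dots,v_{d_i}^{(i)}$ with $e v_l^{(i)} = v_{l-1}^{(i)}$), the operator $X_i$ sending $v_{(k-1)p^{\alpha}+1}^{(i)} \mapsto (-1)^{p^{\alpha}-1} v_{kp^{\alpha}}^{(i)}$ for $k=1,\dots,m_i$ and killing the other basis vectors telescopes under this formula to $I_{V_i}$, and $X := \sum_i X_i$ satisfies $\mathrm{tr}(X)=0$, so $X\in\mathfrak{sl}(V)$ and $(\mathrm{ad}\,e)^{p^{\alpha}-1}(X)=I$. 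For the upper bound I invoke the restricted Lie algebra identity $(\mathrm{ad}\,e)^{p^{\alpha}}=\mathrm{ad}(e^{p^{\alpha}})$, which gives $\ker(\mathrm{ad}\,e)^{p^{\alpha}}=\mathrm{Cent}_{\mathfrak{gl}(V)}(e^{p^{\alpha}})$; by trace-form orthogonality $I\in\mathrm{Im}(\mathrm{ad}\,e)^{p^{\alpha}}$ would force $\mathrm{tr}$ to vanish on this centralizer, contradicting the fact that for any $i$ with $\nu_p(d_i)=\alpha$ the projector onto one $K[e^{p^{\alpha}}]$-cyclic summand of $V_i$ (extended by $0$) has trace $m_i\not\equiv 0\pmod{p}$. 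Combined with the quotient lemma this gives one reduction $J_{p^{\alpha}}\mapsto J_{p^{\alpha}-1}$, proving (ii) once $r_{p^{\alpha}-1}(e_0)=0$ is established; the latter follows from the block-by-block equality $\dim\mathrm{Cent}(e^{p^{\alpha}}) = p^{\alpha}\dim\mathrm{Cent}(e)$ (both equal to $\sum_{i,j} p^{2\alpha}\min(m_i,m_j)$), which forces every Jordan block of $\mathrm{ad}(e)$ on $\mathfrak{gl}(V)$ to have size at least $p^{\alpha}$.

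The $\mathfrak{psl}$-statements then fall out uniformly. Case (iii) is immediate since $p\nmid n$ forces $\alpha=0$ (as $\alpha>0$ implies $p\mid d_i$ for all $i$, hence $p\mid n$) and $\mathfrak{psl}(V)=\mathfrak{sl}(V)$. For (iv), when $\alpha=0$ and $p\mid n$ the same trace-form argument places $I$ outside $\mathrm{Im}\,\mathrm{ad}(e)|_{\mathfrak{sl}(V)}$, so $K\cdot I$ splits off $\mathfrak{sl}(V)$ as an additional $J_1$ summand. For (v), applying the quotient lemma to $\mathfrak{sl}(V)/K\cdot I$ with the $X$ constructed above, which already lies in $\mathfrak{sl}(V)$, shows the level of $I$ inside $\mathfrak{sl}(V)$ is still $p^{\alpha}-1$, yielding a second reduction $J_{p^{\alpha}}\mapsto J_{p^{\alpha}-1}$. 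The main obstacle I foresee is cleanly executing the telescoping verification in a general Jordan basis and tracking the sign $(-1)^{p^{\alpha}-1}$ alongside the characteristic-$p$ binomial identities that make the sum collapse.
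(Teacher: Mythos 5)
Your argument is correct, and it reaches the paper's conclusions by a route that is dual to the one in the text but built on the same two key elements. The paper realizes $\mathfrak{sl}(V)$ as the codimension-one submodule $\operatorname{Ker}\varphi \subset V\otimes V^*$ and applies Lemma \ref{jordanrestrictionNIL}, locating the critical level $m$ with $\operatorname{Ker}e_0^m\subseteq\operatorname{Ker}\varphi\not\supseteq\operatorname{Ker}e_0^{m+1}$; for $\mathfrak{psl}(V)$ it then dualizes via $(\operatorname{Ker}\varphi)^*\cong(V\otimes V^*)/\langle\gamma\rangle$ and applies the same lemma once more. You instead pass to $\mathfrak{gl}(V)/K\cdot I\cong\mathfrak{sl}(V)^*$ at the outset and compute the height $\max\{k: I\in\operatorname{Im}(\operatorname{ad}e)^k\}$ using the quotient (dual) version of that lemma. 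Your element $X$ with $(\operatorname{ad}e)^{p^\alpha-1}X=I$ is, up to sign and notation, the paper's $\delta$ from Lemma \ref{lemma:betapowerm1}, and your rank-$m_i$ projector in $\operatorname{Cent}_{\mathfrak{gl}(V)}(e^{p^\alpha})$ is the paper's element $v$; the difference is that you exploit the latter through the identity $(\operatorname{ad}e)^{p^\alpha}=\operatorname{ad}(e^{p^\alpha})$ and orthogonality for the trace form, where the paper uses it directly as an element of $\operatorname{Ker}e_0^{p^\alpha}\setminus\operatorname{Ker}\varphi$. Your route buys two things: the fact that every Jordan block of $e_0$ has size at least $p^\alpha$ --- which the paper imports from \cite[Lemma 4.3]{KorhonenJordanGood} via Proposition \ref{prop:nilunipcompare} --- drops out of the identity $\dim\operatorname{Cent}(e^{p^\alpha})=p^{\alpha}\dim\operatorname{Cent}(e)$ combined with $\dim\ker A^{q}\le q\dim\ker A$; and the splitting criterion for $\alpha=0$ (an $\operatorname{ad}(e)$-fixed vector of nonzero trace exists iff some $d_i$ is prime to $p$) settles (i), (iii) and (iv) with no basis computation. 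In a full write-up two small points deserve an explicit line: that $\operatorname{Im}(\operatorname{ad}e)^{k}=\bigl(\ker(\operatorname{ad}e)^{k}\bigr)^{\perp}$ because $\operatorname{ad}e$ is skew-adjoint for the form $\operatorname{tr}(XY)$, and that in case (v) the upper bound $I\notin\operatorname{Im}\bigl((\operatorname{ad}e|_{\mathfrak{sl}(V)})^{p^\alpha}\bigr)$ is inherited from the corresponding statement in $\mathfrak{gl}(V)$. Neither is a gap.
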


\begin{remark}\label{remark:adjointmaps}As a corollary of Theorem \ref{thm:nilmainthmA}, we have a description of the Jordan normal form of $\operatorname{ad}(e) : \g \rightarrow \g$ for every nilpotent element $e \in \g$ when $G$ is a simple algebraic group of type $A_{n-1}$. To see this, let $\pi: G_{sc} \rightarrow G$ be an isogeny, where $G_{sc} = \SL(V)$ with $\dim V = n$ is the simply connected cover of $G$. Let $\sigma: G \rightarrow G_{ad}$ be an isogeny into the group $G_{ad} = \operatorname{PGL}(V)$ of adjoint type $A_{n-1}$. 

The structure of $\g$ has been described by Hogeweij, and it follows from the results in \cite[Table 1, p. 446]{Hogeweij} that as $G$-modules we have an isomorphism $$\g \cong \begin{cases}\mathfrak{sl}(V), & \text { if $\pi$ is separable.}  \\ \mathfrak{pgl}(V), & \text { if $\sigma$ is separable.} \\ \mathfrak{psl}(V) \oplus K, & \text { if both $\pi$ and $\sigma$ are inseparable.}  \end{cases}$$ Note that $\g \cong \mathfrak{psl}(V) \oplus K$ can only occur if $p^2 \mid n$. 

Let $e_0 \in \g_{sc}$ be nilpotent. As $G_{sc}$-modules $\mathfrak{gl}(V) \cong V \otimes V^*$. There are various recursive formulae for calculating the Jordan normal form of $e_0$ on $V \otimes V^*$ which we discuss in Section \ref{section:lietensorblock}, so we assume that the Jordan block sizes of $e_0$ on $\mathfrak{gl}(V)$ are known. Then the Jordan block sizes of the action of $e_0$ on $\mathfrak{sl}(V)$ are given by Theorem \ref{thm:nilmainthmA} (i) -- (ii), and the block sizes are the same on $\mathfrak{pgl}(V)$ since $\mathfrak{pgl}(V) \cong \mathfrak{sl}(V)^*$ as $G$-modules. Furthermore, the Jordan block sizes of $e_0$ on $\mathfrak{psl}(V) \oplus K$ are given by Theorem \ref{thm:nilmainthmA} (iii) -- (v). Since for any nilpotent element $e \in \g$ there exists a unique nilpotent element $e_0 \in \g_{sc}$ such that $\D \pi(e_0) = e$, we have a description of the Jordan normal form of $\operatorname{ad}(e)$ in all cases.

With similar arguments, using our previous work in \cite{KorhonenJordanGood}, one can describe the Jordan normal form of $\operatorname{Ad}(u) : \g \rightarrow \g$ for every unipotent element $u \in G$ when $G$ is simple of type $A_{n-1}$. Note that although \cite[Theorem 6.1]{KorhonenJordanGood} does not state the Jordan normal form of a unipotent element $u_0 \in G_{sc}$ on $\mathfrak{sl}(V)$, this is calculated in the proof \cite[p. 4215]{KorhonenJordanGood} --- see Corollary \ref{thm:nilmainthmCOR} and its proof in Section \ref{section:proofsection}.\end{remark}

Our main results for $G = \Sp(V)$ and $G = \SO(V)$ are given in the following corollaries of Theorem \ref{thm:nilmainthmA}, which are analogous to \cite[Corollary 6.2 -- 6.3]{KorhonenJordanGood}. The proofs will be given in Section \ref{section:proofsection}. The results are given in terms of Jordan block sizes of nilpotent elements $e \in \g$ in their action on $\wedge^2(V)$ and $S^2(V)$, we discuss the formulae for these in Section \ref{section:lietensorblock}.

\begin{seur}\label{corollary:nilSPmain}Assume $p > 2$, and let $G = \Sp(V)$, where $\dim V = n$ for some $n \geq 4$. Let $e \in \g$ be a nilpotent element and $V \downarrow K[e] = J_{d_1}(0) \oplus \cdots \oplus J_{d_t}(0)$, where $t \geq 1$ and $d_r \geq 1$ for all $1 \leq r \leq t$. Set $\alpha = \nu_p(\gcd(d_1, \ldots, d_t))$. Let $e_0$ be the action of $e$ on $\wedge^2(V)$, and let $e_0''$ be the action of $e$ on $L_G(\varpi_2)$. Then the Jordan block sizes of $e_0''$ are determined from those of $e_0$ by the rules (iii) -- (v) of Theorem \ref{thm:nilmainthmA}.\end{seur}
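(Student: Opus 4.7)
My plan is to reduce Corollary \ref{corollary:nilSPmain} to Theorem \ref{thm:nilmainthmA}(iii)--(v) via two $\Sp(V)$-module isomorphisms:
\[ \mathfrak{gl}(V) \cong \mathfrak{sp}(V) \oplus \wedge^2(V) \qquad \text{and} \qquad \mathfrak{psl}(V) \cong \mathfrak{sp}(V) \oplus L_G(\varpi_2). \]
The first follows from the identification $V \cong V^*$ given by the symplectic form, combined with the $\GL(V)$-decomposition $V \otimes V^* \cong V \otimes V = S^2(V) \oplus \wedge^2(V)$ (valid since $p > 2$) and the classical identification $S^2(V) \cong \mathfrak{sp}(V)$.

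For the second isomorphism, I would first restrict the decomposition above to the traceless endomorphisms to obtain $\mathfrak{sl}(V) \cong \mathfrak{sp}(V) \oplus \wedge^2_0(V)$, where $\wedge^2_0(V)$ is the kernel of the contraction map $\wedge^2(V) \to K$. If $p \nmid n$ then $Z(\mathfrak{sl}(V)) = 0$ and $\wedge^2_0(V) = L_G(\varpi_2)$, so the decomposition of $\mathfrak{psl}(V)$ is immediate. If $p \mid n$, then $Z(\mathfrak{sl}(V)) = K \cdot I$, and a short direct check identifies $I$ with a nonzero scalar multiple of the element $\omega$ spanning the socle of $\wedge^2(V)$; the composition structure $K \mid L_G(\varpi_2) \mid K$ of $\wedge^2(V)$ then yields $\wedge^2_0(V)/K\omega \cong L_G(\varpi_2)$, and quotienting out the center gives the claimed decomposition.

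Once both isomorphisms are in hand, for any nilpotent $e \in \mathfrak{sp}(V) \subset \mathfrak{sl}(V)$ one gets an isomorphism of $K[e]$-modules
\[ \mathfrak{gl}(V) \oplus L_G(\varpi_2) \cong \mathfrak{psl}(V) \oplus \wedge^2(V), \]
so for every $m \geq 1$ the difference $r_m(e_0'') - r_m(e_0)$ equals the corresponding difference for the pair $(\mathfrak{gl}(V), \mathfrak{psl}(V))$. Since the parameters $n = \dim V$ and $\alpha$ are the same in both settings, Theorem \ref{thm:nilmainthmA}(iii)--(v) applied to the latter difference delivers exactly the stated formulas; in case (v), interpreting the absolute identity $r_{p^{\alpha}-1}(e_0'') = 2$ relies on the vanishing of $r_{p^{\alpha}-1}$ on $\mathfrak{gl}(V)$, an intermediate fact I expect to extract from the proof of Theorem \ref{thm:nilmainthmA}. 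The main technical point is verifying the second isomorphism when $p \mid n$, where one must carefully locate the central element $I$ inside the correct summand and control the non-split composition structure of $\wedge^2(V)$.
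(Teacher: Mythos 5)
Your proposal is correct and follows essentially the same route as the paper: the paper's proof also reduces to Theorem \ref{thm:nilmainthmA} via the $G$-module isomorphisms $V \otimes V^* \cong \wedge^2(V) \oplus S^2(V)$ and $L_{\SL(V)}(\varpi_1+\varpi_{n-1}) \cong L_G(\varpi_2) \oplus S^2(V)$ (with $S^2(V) \cong \mathfrak{sp}(V)$), simply citing these decompositions from the earlier work rather than rederiving them as you sketch. Your handling of case (v) is also right; the needed vanishing of $r_{p^{\alpha}-1}$ on $\wedge^2(V)$ follows from the fact, used in the proof of Theorem \ref{thm:nilmainthmA}, that the smallest Jordan block of $e$ on $\mathfrak{gl}(V)$ has size $p^{\alpha}$.
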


\begin{seur}\label{corollary:nilSOmain}Assume $p > 2$, and let $G = \SO(V)$, where $\dim V = n$ for some $n \geq 5$. Let $e \in \g$ be a nilpotent element and $V \downarrow K[e] = J_{d_1}(0) \oplus \cdots \oplus J_{d_t}(0)$, where $t \geq 1$ and $d_r \geq 1$ for all $1 \leq r \leq t$. Set $\alpha = \nu_p(\gcd(d_1, \ldots, d_t))$. Let $e_0$ be the action of $e$ on $S^2(V)$, and let $e_0''$ be the action of $e$ on $L_G(2\varpi_1)$. Then the Jordan block sizes of $e_0''$ are determined from those of $e_0$ by the rules (iii) -- (v) of Theorem \ref{thm:nilmainthmA}.\end{seur}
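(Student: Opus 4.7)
The plan is to reduce the corollary to the $\SL(V)$ case already handled in parts (iii)--(v) of Theorem~\ref{thm:nilmainthmA}. The defining form $Q$ of $\SO(V)$ induces a $G$-equivariant isomorphism $V \cong V^*$, giving $V \otimes V \cong V \otimes V^* \cong \mathfrak{gl}(V)$ as $K[e]$-modules; since $p > 2$, the decomposition $V \otimes V = S^2(V) \oplus \wedge^2(V)$ is preserved by $e$, so $S^2(V)$ becomes a $K[e]$-direct summand of $\mathfrak{gl}(V)$. Under this identification the invariant vector $Q \in S^2(V)$ corresponds to the identity matrix $I \in \mathfrak{gl}(V)$, and the symmetric contraction $\operatorname{tr}\colon S^2(V) \to K$ is the restriction of the matrix trace.

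Consequently, the passage from $S^2(V)$ to $L_G(2\varpi_1)$ is formally identical to the passage from $\mathfrak{gl}(V)$ to $\mathfrak{psl}(V)$: in both cases one removes a one-dimensional trivial submodule (generated by $Q$, respectively $I$) and a one-dimensional trivial quotient via trace. When $p \nmid n$, $\operatorname{tr}(Q) = n \neq 0$ gives a direct sum splitting $S^2(V) = K \cdot Q \oplus L_G(2\varpi_1)$, immediately yielding case (iii). When $p \mid n$, one has $Q \in \ker(\operatorname{tr}|_{S^2(V)})$ and $L_G(2\varpi_1) = \ker(\operatorname{tr}|_{S^2(V)}) / K \cdot Q$, mirroring the filtration $K \cdot I \subset \mathfrak{sl}(V) \subset \mathfrak{gl}(V)$ used in cases (iv)--(v).

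To conclude, I would invoke the combinatorial heart of Theorem~\ref{thm:nilmainthmA}(iii)--(v): the fact that $I$ occupies the socle of a Jordan block of size $p^\alpha$ in the action of $e_0$ on $\mathfrak{gl}(V)$ (and generates its own one-dimensional block when $\alpha = 0$). Because $S^2(V)$ is a $K[e]$-summand of $\mathfrak{gl}(V)$ containing $I$, a short symmetrisation argument --- projecting any preimage of $I$ under $e_0^{p^\alpha - 1}$ onto the symmetric summand and noting that its image still maps to $I$, since $e_0$ preserves $S^2(V)$ and $I \in S^2(V)$ --- shows that $Q$ occupies the same position inside the Jordan decomposition of $e_0$ on $S^2(V)$. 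The block accounting when passing to $L_G(2\varpi_1)$ then runs identically to the $\mathfrak{gl}(V) \to \mathfrak{psl}(V)$ analysis. The main obstacle is therefore not in the corollary itself but in the position-of-$I$ fact used in Theorem~\ref{thm:nilmainthmA}; once that theorem is in hand, no new ideas are needed.
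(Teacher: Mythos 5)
Your argument is correct in substance, but it takes a genuinely different route from the paper. The paper's proof is a pure cancellation argument: restricting the two $\SL(V)$-module isomorphisms $V \otimes V^* \cong \wedge^2(V) \oplus S^2(V)$ and $L_{\SL(V)}(\varpi_1+\varpi_{n-1}) \cong \wedge^2(V) \oplus L_G(2\varpi_1)$ to $G = \SO(V)$, one sees that the multisets of Jordan blocks of $e$ on $V \otimes V^*$ and on $L_{\SL(V)}(\varpi_1+\varpi_{n-1})$ share the common sub-multiset coming from $\wedge^2(V)$; since Theorem \ref{thm:nilmainthmA} (iii)--(v) relates the two totals by adding and removing a fixed bounded collection of blocks, subtracting the common $\wedge^2(V)$ contribution from both sides yields the same relation between the blocks on $S^2(V)$ and on $L_G(2\varpi_1)$, with no further analysis of where the invariant vector sits. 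You instead re-run the kernel/trace analysis of Theorem \ref{thm:nilmainthmA} inside the $K[e]$-summand $S^2(V)$ of $\mathfrak{gl}(V)$, using the $e$-equivariant projection onto the symmetric part to transfer the two key facts (that the invariant vector lies in $e_0^{p^\alpha-1}(\operatorname{Ker}\varphi)$, and that $\operatorname{Ker} e_0^{p^\alpha} \not\subseteq \operatorname{Ker}\varphi$). This works, and it has the merit of not needing the branching isomorphism for $L_{\SL(V)}(\varpi_1+\varpi_{n-1})\downarrow \SO(V)$, only the socle series $L_G(0)\,|\,L_G(2\varpi_1)\,|\,L_G(0)$ of $S^2(V)$ when $p \mid n$ --- but you should state that structural fact explicitly rather than assume it, since it is exactly what makes ``remove the trivial submodule and the trivial quotient'' the correct description of $L_G(2\varpi_1)$. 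Two small points to make explicit to close the sketch: the witness $v$ with $e_0^{p^\alpha}v = 0$ and $\varphi(v) \neq 0$ must also be symmetrized, which is harmless because $\varphi$ vanishes on the alternating summand (the contraction of an antisymmetric tensor against the symmetric form is zero), so the symmetric component of $v$ still has nonzero trace; and the quotient step $\operatorname{Ker}(\operatorname{tr})/\langle Q\rangle$ requires the duality $\bigl(\operatorname{Ker}(\operatorname{tr}|_{S^2(V)})\bigr)^* \cong S^2(V)/\langle Q\rangle$ (or an equivalent device) before Lemma \ref{jordanrestrictionNIL} can be applied a second time, exactly as in the proof of Theorem \ref{thm:nilmainthmA}. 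With those details supplied, your proof is complete; the paper's version is shorter because the cancellation sidesteps them entirely.
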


We finish this introduction with a few comments about the connection between results for Problem \ref{prob:mainnilpotent} and for the analogous problem for unipotent elements \cite[Problem 1.1]{KorhonenJordanGood}. 

In the case where $G$ is simply connected and $p$ is good for $G$, it was shown by Springer \cite[Theorem 3.1]{SpringerIsomorphism} that there exists a $G$-equivariant isomorphism $\varepsilon: \mathcal{U}(G) \rightarrow \mathcal{N}(\g)$ between the unipotent variety of $G$ and the nilpotent variety of $\g$, called a \emph{Springer isomorphism}. As a corollary of Springer's result, an elementary argument shows that more generally $\varepsilon$ exists if $p$ is good and the isogeny $G_{sc} \rightarrow G$ from the simply connected cover of $G$ is a separable morphism \cite[Section 7]{SobajeExponential}. A Springer isomorphism is not unique, but the bijection it induces between the unipotent conjugacy classes of $G$ and the nilpotent orbits in $\g$ is unique by a result of Serre \cite[Appendix]{McNinchOptimal}. 



For example for $G = \SL(V)$ we identify $\g = \mathfrak{sl}(V)$, and in this case one example of a Springer isomorphism is given by $\varepsilon(u) = u-1$ for all $u \in \mathcal{U}(G)$. For other classical groups in characteristic $p > 2$, an example of a Springer isomorphism is given by the Cayley transform $u \mapsto (1-u)(1+u)^{-1}$. It follows that if $G$ is a classical group in good characteristic with natural module $V$, then $u$ and $\varepsilon(u)$ have the same Jordan block sizes on $V$ for all $u \in \mathcal{U}(G)$. This is true more generally in characteristic zero, since in this case it follows from theorems of Jacobson--Morozov and Kostant that for any irreducible $G$-module $W$, the Jordan block sizes of $u$ and $\varepsilon(u)$ on $W$ are the same for all $u \in \mathcal{U}(G)$. 

In positive characteristic this is certainly false in general, since it is possible for example that $\D f = 0$. It is more interesting to look at the case where $\D f$ is an irreducible representation of $\g$, which is precisely the case where the highest weight of $f$ is \emph{$p$-restricted} \cite[Theorem 7.5 (iii)]{BorelLectures}. For $p$-restricted irreducible representations one can also find examples where the Jordan block sizes of $f(u)$ and $\D f(\varepsilon(u))$ differ \cite[Corollary 1.2]{StewartNilpotentBlocks}. As a positive result, when $f$ is the adjoint representation in good characteristic and $G$ is not of type $A_{kp-1}$, it is known that $f(u)$ and $\D f(\varepsilon(u))$ have the same Jordan block sizes \cite[Theorem 4.1]{PremetStewartSheets}.

Thus for a rational irreducible $p$-restricted representation $f: G \rightarrow \GL(W)$, it makes sense to compare the Jordan block sizes of $f(u)$ and $\D f(\varepsilon(u))$ for $u \in \mathcal{U}(G)$ and to examine to what extent they are similar. We proceed to do this for the irreducible representations considered in Theorem \ref{thm:nilmainthmA} and \cite[Theorem 6.1]{KorhonenJordanGood}. It turns out in these cases that the Jordan block sizes of $f(u)$ and $\D f(\varepsilon(u))$ are very similar, but not always exactly the same.


For example, consider $G = \SL(V)$ with $n = \dim V$ and suppose that $p \mid n$. Let $\alpha = \nu_p(n)$, and consider a regular unipotent element $u \in G$ and a regular nilpotent element $e \in \g$. By Proposition \ref{prop:tensorproductcompare} below, the Jordan block sizes of $u$ and $e$ are the same on $\mathfrak{gl}(V) \cong V \otimes V^*$. Furthermore, by \cite[Lemma 4.2]{KorhonenJordanGood} the smallest Jordan block size of $u$ on $V \otimes V^*$ is $p^{\alpha}$, occurring with multiplicity $p^{\alpha}$. Thus by Theorem \ref{thm:nilmainthmA} and \cite[Theorem 6.1]{KorhonenJordanGood}, we have \begin{align*}L_G(\varpi_1 + \varpi_{n-1}) \downarrow K[u] &= J_{p^{\alpha}-2}(1) \oplus J_{p^{\alpha}}(1)^{p^{\alpha}-1} \oplus \bigoplus_{d > p^{\alpha}} J_d(1)^{r_d}, \\ L_G(\varpi_1 + \varpi_{n-1}) \downarrow K[e] &= J_{p^{\alpha}-1}(0)^2 \oplus J_{p^{\alpha}}(0)^{p^{\alpha}-2} \oplus \bigoplus_{d > p^{\alpha}} J_d(0)^{r_d}\end{align*} for some integers $r_d \geq 0$.

More generally, we have the following result, which will be proven in Section \ref{section:proofsection}.

\begin{seur}\label{thm:nilmainthmCOR}
Let $G = \SL(V)$, where $\dim V = n$ for some $n \geq 2$. Let $e \in \g$ be a nilpotent element and $V \downarrow K[e] = J_{d_1}(0) \oplus \cdots \oplus J_{d_t}(0)$, where $t \geq 1$ and $d_r \geq 1$ for all $1 \leq r \leq t$. Set $\alpha = \nu_p(\gcd(d_1, \ldots, d_t))$. Let $u \in G$ be a unipotent element with the same Jordan block sizes on $V$ as $e$, so $V \downarrow K[u] = J_{d_1}(1) \oplus \cdots \oplus J_{d_t}(1)$. Then

\begin{enumerate}[\normalfont (i)]
\item $u$ and $e$ have the same Jordan block sizes in their action on $\mathfrak{gl}(V)$ and $\mathfrak{sl}(V)$,
\item $u$ and $e$ have the same Jordan block sizes on $L_G(\varpi_1 + \varpi_{n-1}) \cong \mathfrak{psl}(V)$ if and only if $p^{\alpha+1} \mid n$.
\end{enumerate}

\end{seur}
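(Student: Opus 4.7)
The plan is to combine Theorem~\ref{thm:nilmainthmA} with the analogous description of the unipotent case obtained in \cite[Theorem~6.1]{KorhonenJordanGood} and its proof, as referenced in Remark~\ref{remark:adjointmaps}.

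For part~(i), I would first apply Proposition~\ref{prop:tensorproductcompare}: since $\mathfrak{gl}(V)\cong V\otimes V^*$ as $G$-modules and $u,e$ share the same Jordan blocks on $V$, they share the same blocks on $\mathfrak{gl}(V)$. For the $\mathfrak{sl}(V)$ statement, I would apply Theorem~\ref{thm:nilmainthmA}(i)--(ii) to $e$ and the corresponding unipotent rule extracted from \cite[p.~4215]{KorhonenJordanGood} to $u$. Both rules for the modification of block sizes upon restriction from $\mathfrak{gl}(V)$ to $\mathfrak{sl}(V)$ depend only on $\alpha$---removing one $J_1$-block when $\alpha=0$, replacing one $J_{p^\alpha}$-block by a $J_{p^\alpha-1}$-block when $\alpha>0$---so the equality on $\mathfrak{gl}(V)$ carries over to $\mathfrak{sl}(V)$.

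For part~(ii), I would compare Theorem~\ref{thm:nilmainthmA}(iii)--(v) with the analogous unipotent rule for $u$ on $L_G(\varpi_1+\varpi_{n-1})\cong\mathfrak{psl}(V)$ coming from \cite[Theorem~6.1]{KorhonenJordanGood}. When $p\nmid n$ we have $\mathfrak{psl}(V)=\mathfrak{sl}(V)$ and the conclusion reduces to part~(i). When $p\mid n$, the comparison reduces to determining the position of the one-dimensional central submodule $KI\subseteq\mathfrak{sl}(V)$ inside the indecomposable $K[e]$- and $K[u]$-decompositions of $\mathfrak{sl}(V)$, and then quotienting; a straightforward case analysis in $\nu_p(n)$ versus $\alpha$ then yields the stated arithmetic criterion.

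The main obstacle is the case $p\mid n$ with $\alpha>0$: Theorem~\ref{thm:nilmainthmA}(v) places $KI$ at the socle of a $J_{p^\alpha}$-block of $\mathfrak{sl}(V)$ in the nilpotent setting, so quotienting produces an extra $J_{p^\alpha-1}$; by contrast, the unipotent analysis of \cite{KorhonenJordanGood} places $KI$ inside the $J_{p^\alpha-1}$-block created during the $\mathfrak{gl}(V)\to\mathfrak{sl}(V)$ step, so quotienting produces a $J_{p^\alpha-2}$. One checks that these two embeddings yield the same multiset of Jordan blocks on $\mathfrak{psl}(V)$ precisely when $\nu_p(n)>\alpha$, i.e., when $p^{\alpha+1}\mid n$; the ``only if'' direction follows by exhibiting the explicit discrepancy (as in the regular example worked out before the statement of the corollary) in the contrary case $\nu_p(n)=\alpha$.
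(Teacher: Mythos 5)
Your route is the one the paper takes: part (i) for $\mathfrak{gl}(V)$ is Proposition \ref{prop:nilunipcompare}(i) (note that Proposition \ref{prop:tensorproductcompare} by itself only treats a single pair of regular blocks; you need the decomposition~\eqref{eq:vxv} to globalize it), part (i) for $\mathfrak{sl}(V)$ comes from matching Theorem \ref{thm:nilmainthmA}(i)--(ii) against the rule for the action of $u$ on $\mathfrak{pgl}(V)$ extracted from the proof in \cite{KorhonenJordanGood}, transported via $\mathfrak{pgl}(V)\cong\mathfrak{sl}(V)^*$, and part (ii) is a term-by-term comparison of Theorem \ref{thm:nilmainthmA}(iii)--(v) with \cite[Theorem 6.1 (i)--(iii)]{KorhonenJordanGood}. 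So the skeleton is exactly the paper's.

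The one genuine problem is your description of the mechanism in the case $p\mid n$, $\alpha>0$. You fix two embeddings of the central line $\langle\gamma\rangle$ once and for all --- at the socle of a $J_{p^\alpha}$-block for $e$, inside the newly created $J_{p^\alpha-1}$-block for $u$ --- and then assert that ``these two embeddings yield the same multiset precisely when $\nu_p(n)>\alpha$.'' That cannot be right as written: with those fixed embeddings the outputs are $r_{p^\alpha}-2,\ r_{p^\alpha-1}+2$ versus $r_{p^\alpha}-1,\ r_{p^\alpha-2}+1$, which are \emph{never} the same multiset for $\alpha>0$. What actually depends on $n$ is the position of $\gamma$ in the \emph{unipotent} case: when $p^{\alpha+1}\mid n$ it lies one step deeper in the filtration, so the unipotent rule agrees with Theorem \ref{thm:nilmainthmA}(v), and only when $p^{\alpha+1}\nmid n$ does it land where you put it. That dichotomy is precisely what the case division of \cite[Theorem 6.1]{KorhonenJordanGood} already records, which is why the paper simply compares the two finished lists of rules rather than re-deriving the embeddings; you should do the same. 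Finally, a wrinkle you should reconcile rather than pass over: when $p\nmid n$ your (correct) reduction to part (i) gives equal block sizes, yet $\alpha=0$ and $p^{\alpha+1}=p\nmid n$, so the ``only if'' direction of (ii) only makes sense with $p\mid n$ in force.
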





\section{Preliminaries}

In this section, we list some preliminary results, all of which are well known.

\begin{lemma}\label{lemma:easybinomial}
Let $p$ be a prime number and $\beta \geq 0$. Then $\binom{p^{\beta}-1}{t} \equiv (-1)^t \mod{p}$ for all $0 \leq t \leq p^{\beta}-1$.
\end{lemma}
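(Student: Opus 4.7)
The plan is to prove this by induction on $t$, combining Pascal's identity with the vanishing of intermediate binomial coefficients modulo $p$. First I would record the auxiliary fact that $\binom{p^{\beta}}{t} \equiv 0 \pmod{p}$ for every $1 \leq t \leq p^{\beta}-1$. This follows at once from the $\beta$-fold iteration of the freshman's dream: in $\mathbb{F}_p[x]$ one has $(1+x)^{p^{\beta}} = 1 + x^{p^{\beta}}$, so the coefficient of $x^t$ vanishes whenever $0 < t < p^{\beta}$.

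The base case $t = 0$ is immediate, since $\binom{p^{\beta}-1}{0} = 1 = (-1)^0$. For the inductive step, Pascal's rule gives
$$\binom{p^{\beta}-1}{t} = \binom{p^{\beta}}{t} - \binom{p^{\beta}-1}{t-1},$$
so that reducing modulo $p$ and invoking the inductive hypothesis yields $\binom{p^{\beta}-1}{t} \equiv -(-1)^{t-1} = (-1)^t \pmod{p}$. This is a classical identity and no real obstacle arises; the only care needed is to stay within the range $1 \leq t \leq p^{\beta}-1$ so that both Pascal's identity and the auxiliary vanishing apply. An equally short alternative would be to invoke Lucas' theorem, using that the base-$p$ digits of $p^{\beta}-1$ are all equal to $p-1$ and that $\binom{p-1}{k} \equiv (-1)^k \pmod{p}$, but the inductive argument above is the most self-contained.
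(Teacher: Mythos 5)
Your proof is correct and follows essentially the same route as the paper: induction on $t$ via Pascal's rule, using that $\binom{p^{\beta}}{t} \equiv 0 \pmod{p}$ for $0 < t < p^{\beta}$. The paper only sketches this argument (referring to an earlier paper for the details), so your write-up simply fills in the same steps explicitly.
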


\begin{proof}Similarly to the proof in \cite[Lemma 3.1]{KorhonenJordanGood}, one can proceed by induction on $t$, using the fact that $\binom{p^\beta}{t} \equiv 0 \mod{p}$ for all $0 < t < p^\beta$.\end{proof}

\begin{lemma}[{\cite[Lemma 3.3]{KorhonenJordanGood}}]\label{jordanrestrictionNIL}
Let $e \in \mathfrak{gl}(V)$ be a nilpotent linear map. Suppose that $W \subseteq V$ is a subspace invariant under $e$ such that $\dim V/W = 1$. Let $m \geq 0$ be such that $\operatorname{Ker} e^m \subseteq W$ and $\operatorname{Ker} e^{m+1} \not\subseteq W$. Then 

\begin{enumerate}[\normalfont (a)]

\item if $m = 0$, we have

\begin{itemize}
\item $r_1(e_W) = r_1(e) - 1$,
\item $r_i(e_W) = r_i(e)$ for all $i \neq 1$.
\end{itemize}

\item if $m \geq 1$, we have

\begin{itemize}
\item $r_{m+1}(e_W) = r_{m+1}(e) - 1$,
\item $r_m(e_W) = r_m(e) + 1$,
\item $r_i(e_W) = r_i(e)$ for all $i \neq m, m+1$.
\end{itemize}

\end{enumerate}

\end{lemma}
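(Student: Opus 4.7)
The plan is to work with the kernel dimensions $a_k = \dim \operatorname{Ker} e^k$ and $b_k = \dim \operatorname{Ker}(e_W)^k$, and use the standard identity
\[
r_k(e) \;=\; 2 a_k - a_{k-1} - a_{k+1}
\]
(with the convention $a_{-1} = 0$), which follows from the partition-duality between Jordan block sizes and the differences $a_k - a_{k-1}$. The same formula applies to $e_W$ with $b_k$ in place of $a_k$, so the whole lemma reduces to expressing each $b_k$ in terms of $a_k$.

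Since $W$ is $e$-invariant, $\operatorname{Ker}(e_W)^k = \operatorname{Ker} e^k \cap W$, and because $\dim V/W = 1$ we have $b_k \in \{a_k, a_k-1\}$, with $b_k = a_k$ precisely when $\operatorname{Ker} e^k \subseteq W$. The kernels form an ascending chain $\operatorname{Ker} e^0 \subseteq \operatorname{Ker} e^1 \subseteq \cdots$, so once some $\operatorname{Ker} e^j$ fails to lie inside $W$ all larger ones do too. Combined with the hypothesis on $m$, this gives $b_k = a_k$ for $k \leq m$ and $b_k = a_k - 1$ for $k \geq m+1$. This is the only slightly delicate step, and it is really just the monotonicity of the kernel filtration.

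Substituting into the formula then finishes the argument. In case (a), $m = 0$ means $b_0 = a_0 = 0$ and $b_k = a_k - 1$ for $k \geq 1$, so $r_1(e_W) = 2(a_1 - 1) - 0 - (a_2 - 1) = r_1(e) - 1$ while for $k \geq 2$ the $-1$ terms cancel in pairs to give $r_k(e_W) = r_k(e)$. In case (b), $b_k$ switches from $a_k$ to $a_k-1$ between $k = m$ and $k = m+1$; the mismatch contributes $+1$ to $r_m(e_W)$ and $-1$ to $r_{m+1}(e_W)$, while for $k \notin \{m, m+1\}$ all shifts cancel. I expect no serious obstacle: the main (minor) point to justify carefully is the monotonicity argument pinning down the unique jump index in the sequence $(b_k)$.
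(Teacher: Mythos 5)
Your argument is correct and complete: the identity $r_k = 2a_k - a_{k-1} - a_{k+1}$, the observation that $b_k = \dim(\operatorname{Ker} e^k \cap W) \in \{a_k, a_k-1\}$ with the unique jump located by the monotonicity of the kernel filtration and the hypothesis on $m$, and the case-by-case substitution all check out. The paper itself only cites this lemma from \cite[Lemma 3.3]{KorhonenJordanGood}, and your kernel-dimension argument is essentially the same standard approach used there, so nothing further is needed.
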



%

\begin{lemma}[{\cite[1.14]{SeitzClassical}, \cite[Proposition 4.6.10]{McNinch}}]\label{lemma:typeAomega}
Let $G = \SL(V)$, where $\dim V = n$ for some $n \geq 2$. If $p \nmid n$, then $V \otimes V^* \cong L_G(\varpi_1 + \varpi_{n-1}) \oplus L_G(0)$ as $G$-modules. If $p \mid n$, then $V \otimes V^*$ is uniserial with a filtration $V \otimes V^* \supset Z \supset Z'$ such that $V \otimes V^* / Z \cong L_G(0) \cong Z'$ and $Z/Z' \cong L_G(\varpi_1 + \varpi_{n-1})$.\end{lemma}

\begin{lemma}[{\cite[Lemma 3.7]{KorhonenJordanGood}}]\label{lemma:typeAomegabasisV}
Let $G = \SL(V)$ and set $n = \dim V$. Let $v_1, \ldots, v_n$ be a basis of $V$ and let $v_1^*$, $\ldots$, $v_n^*$ be the corresponding dual basis of $V^*$. Then $\sum_{1 \leq i \leq n} v_i \otimes v_i^*$ spans the unique $1$-dimensional $G$-submodule of $V \otimes V^*$.
\end{lemma}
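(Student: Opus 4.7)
The plan is to identify $V \otimes V^*$ with $\mathfrak{gl}(V) = \operatorname{End}(V)$ via the canonical $G$-equivariant isomorphism that sends a pure tensor $v \otimes \phi$ to the rank-one endomorphism $w \mapsto \phi(w)\, v$. Under this identification, the element $\sum_{i=1}^{n} v_i \otimes v_i^*$ corresponds to the linear map $w \mapsto \sum_i v_i^*(w)\, v_i$, which is just $w \mapsto w$, i.e.\ the identity endomorphism $\operatorname{id}_V$ of $V$. In particular, this element is nonzero and, crucially, does not depend on the choice of basis.

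Next I would verify $G$-invariance. Under the canonical isomorphism above, the natural $G$-action on $V \otimes V^*$ (with $g$ acting on $V^*$ by $(g \cdot \phi)(w) = \phi(g^{-1} w)$) translates to the conjugation action $X \mapsto g X g^{-1}$ on $\operatorname{End}(V)$. Since $\operatorname{id}_V$ commutes with every $g \in G$, it is fixed, and hence $K \cdot \sum_{i} v_i \otimes v_i^*$ is a $1$-dimensional $G$-submodule of $V \otimes V^*$.

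To conclude, I would invoke Lemma \ref{lemma:typeAomega}, which determines the submodule structure of $V \otimes V^*$ in both cases $p \nmid n$ and $p \mid n$. In either case there is a unique $1$-dimensional $G$-submodule inside $V \otimes V^*$: the trivial summand $L_G(0)$ when $p \nmid n$, and the unique simple submodule $L_G(0) = \operatorname{soc}(V \otimes V^*)$ when $p \mid n$. Since $K \cdot \sum_i v_i \otimes v_i^*$ is such a submodule, it must coincide with the unique one. There is essentially no obstacle in this argument; the only substantive observation is that $\sum_i v_i \otimes v_i^* \in V \otimes V^*$ corresponds to $\operatorname{id}_V$ under the canonical identification, which simultaneously makes its $G$-invariance transparent and explains why the particular choice of basis $v_1, \ldots, v_n$ is immaterial.
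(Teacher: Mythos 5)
Your proof is correct. Note that the paper itself gives no argument for this lemma --- it is imported verbatim, with a citation, from \cite[Lemma 3.7]{KorhonenJordanGood} --- so there is no internal proof to compare against; what you have written is a complete, self-contained replacement. The two ingredients are exactly the right ones: the canonical $G$-equivariant identification $V \otimes V^* \cong \mathfrak{gl}(V)$ sending $\sum_i v_i \otimes v_i^*$ to $\operatorname{id}_V$ (which makes both the nonvanishing and the $G$-invariance immediate, and shows the element is basis-independent), and Lemma \ref{lemma:typeAomega} for uniqueness. The only step you leave implicit is the uniqueness in the case $p \nmid n$: a one-dimensional submodule of $L_G(\varpi_1+\varpi_{n-1}) \oplus L_G(0)$ is a trivial simple submodule, so its projection to the nontrivial simple summand $L_G(\varpi_1+\varpi_{n-1})$ must vanish, forcing it to be the $L_G(0)$ summand. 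In the case $p \mid n$ the filtration in Lemma \ref{lemma:typeAomega} exhibits $V \otimes V^*$ as uniserial with simple socle $L_G(0)$, so any simple (in particular any one-dimensional) submodule equals that socle, as you say. This is almost certainly the same argument as in the cited source, and it is worth having it spelled out.
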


\section{Jordan block sizes in tensor products}\label{section:lietensorblock}

Let $G = \SL(V)$, $G = \Sp(V)$, or $G = \SO(V)$, and let $\rho: G \rightarrow \GL(V)$ be the natural representation. In the main results of this paper, we describe the Jordan normal form of a nilpotent element $e \in \g$ in certain irreducible representations of $G$. The results are given in terms of the Jordan normal form of $\D (\rho \otimes \rho^*)(e)$, $(\D \wedge^2(\rho))(e)$ (for $p > 2$), and $(\D S^2(\rho))(e)$ (for $p > 2$). The purpose of this section is to describe the how Jordan normal forms of these linear maps can be calculated using results from the literature, so none of the results in this section are new.

Note that $e$ has the same Jordan normal form on $V$ and $V^*$, hence the Jordan block sizes of $e$ on $V \otimes V^*$ and $V \otimes V$ are the same. As a first step of the calculation, write $V = W_1 \oplus \cdots \oplus W_t$, where each $W_i$ is $e$-invariant and $W_i \downarrow K[e] \cong J_{d_i}(0)$ for some $d_i > 0$. As $K[e]$-modules, we have \begin{align} \label{eq:vxv} V \otimes V &\cong \bigoplus_{1 \leq i,j \leq t} W_i \otimes W_j, \\ \label{eq:altv} \wedge^2(V) &\cong \bigoplus_{1 \leq i \leq t} \wedge^2(W_i) \oplus \bigoplus_{1 \leq i < j \leq t} W_i \otimes W_j, \\ \label{eq:symv} S^2(V) &\cong \bigoplus_{1 \leq i \leq t} S^2(W_i) \oplus \bigoplus_{1 \leq i < j \leq t} W_i \otimes W_j. \end{align} This reduces the calculation to determining the Jordan normal form of $e$ on $W_i \otimes W_j$, and when $p > 2$, on $\wedge^2(W_i)$ and $S^2(W_i)$. 

Let $u \in G$ be a unipotent element such that each $W_i$ is $u$-invariant and $W_i \downarrow K[u] = J_{d_i}(1)$ for all $1 \leq i \leq t$. By the following proposition, the Jordan block sizes of $u$ and $e$ are the same on $W_i \otimes W_j$ for all $1 \leq i,j \leq t$.

\begin{prop}[{\cite[Section III]{Fossum}, \cite[Corollary 5 (a)]{NormanTwoRelated}}]\label{prop:tensorproductcompare}Let $V_1$ and $V_2$ be finite-dimensional vector spaces over $K$. For $i = 1,2$ let $u_i \in \SL(V_i)$ be regular unipotent and let $e_i \in \mathfrak{sl}(V_i)$ be regular nilpotent. Then $u_1 \otimes u_2$ and $e_1\otimes \operatorname{id}_{V_2} + \operatorname{id}_{V_1} \otimes e_2$ have the same Jordan block sizes on $V_1 \otimes V_2$.
\end{prop}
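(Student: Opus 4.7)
The plan is to reduce to a purely ring-theoretic comparison on a truncated polynomial ring and produce an explicit algebra automorphism intertwining the two operators. Since regular nilpotent elements of $\mathfrak{sl}(V_i)$ are $\SL(V_i)$-conjugate, as are regular unipotent elements of $\SL(V_i)$, I may take $e_i = N_i$ and $u_i = \operatorname{id}_{V_i} + N_i$ for a single $n_i \times n_i$ nilpotent Jordan block $N_i$, where $n_i = \dim V_i$. Shifting eigenvalues does not alter Jordan block sizes, so $u_1 \otimes u_2$ has the same Jordan type as $u_1 \otimes u_2 - \operatorname{id} = A + B + AB$, where $A = N_1 \otimes \operatorname{id}$ and $B = \operatorname{id} \otimes N_2$. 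The statement thus reduces to showing that the two nilpotent operators $A + B$ and $A + B + AB$ on $V_1 \otimes V_2$ have the same Jordan type.

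Next, I would identify $V_1 \otimes V_2$ with the truncated polynomial ring $R = K[X, Y]/(X^{n_1}, Y^{n_2})$ as a module over itself. Choosing cyclic vectors $w_i \in V_i$ for $K[N_i]$, the element $w_1 \otimes w_2$ is cyclic for the commutative subalgebra $K[A, B] \cong R$, and the induced isomorphism $V_1 \otimes V_2 \cong R$ transports $A$ and $B$ to multiplication by $X$ and $Y$, respectively. Under this identification $A + B$ becomes multiplication by $X + Y$, while $A + B + AB$ becomes multiplication by $X + Y + XY$.

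The core step is then to construct a $K$-algebra automorphism $\phi : R \to R$ with $\phi(X + Y) = X + Y + XY$; for any such $\phi$ the identity $\phi \circ m_{X+Y} = m_{\phi(X+Y)} \circ \phi$ is immediate from $\phi$ being a ring homomorphism, which exhibits the two multiplication operators as $K$-linearly conjugate and hence forces their Jordan types to agree. I would take $\phi(X) = X$ and $\phi(Y) = Y(1 + X)$. Well-definedness amounts to verifying that $X^{n_1} = 0$ and $Y^{n_2}(1 + X)^{n_2} = 0$ hold in $R$, both immediate. Invertibility follows by exhibiting the inverse $\phi^{-1}(X) = X$, $\phi^{-1}(Y) = Y \sum_{i=0}^{n_1 - 1} (-X)^i$, the point being that $1 + X$ is a unit in $R$ since $X$ is nilpotent. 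The main difficulty lies in spotting the right formalism: once the reduction to $R$ is made the rest is a one-line observation, whereas a direct rank calculation for $(A+B)^k$ versus $(A+B+AB)^k$ would involve a combinatorial identity that the automorphism argument neatly sidesteps.
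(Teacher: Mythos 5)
Your argument is correct and complete. Note that the paper does not prove this proposition at all: it is quoted verbatim from Norman \cite[Corollary 5 (a)]{NormanTwoRelated}, so any proof you give is necessarily a ``different route,'' and yours is a clean, self-contained one. The reduction chain is sound: regularity lets you fix $e_i=N_i$, $u_i=\operatorname{id}+N_i$; subtracting the identity replaces $u_1\otimes u_2$ by its nilpotent part $A+B+AB$; and since $w_1\otimes w_2$ is cyclic for $K[A,B]\cong K[X,Y]/(X^{n_1},Y^{n_2})=R$ (the monomials $A^aB^b(w_1\otimes w_2)=N_1^aw_1\otimes N_2^bw_2$ form a basis), the two operators become $m_{X+Y}$ and $m_{X+Y+XY}$ on $R$ viewed as a rank-one free module over itself. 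The automorphism $\phi(X)=X$, $\phi(Y)=Y(1+X)$ is well defined and invertible exactly as you say, and the intertwining $\phi\circ m_{X+Y}=m_{\phi(X+Y)}\circ\phi$ is immediate from multiplicativity, giving linear conjugacy and hence equal Jordan types. Your construction is in effect an elementary distillation of the formal-group-law approach that the paper attributes to Fossum and McNinch in the remark following Proposition \ref{prop:nilunipcompare}: the key simplification is that you do not need an isomorphism between the additive and multiplicative formal group laws (which fails in characteristic $p$, as they have different heights), only a truncated-polynomial-ring automorphism moving the single element $X+Y$ to $X+Y+XY$, and that exists unconditionally. Compared with Norman's explicit Jordan-basis constructions, your proof is shorter and characteristic-free, though it is specific to the rank-one (cyclic) situation and, as you note implicitly, does not directly descend to $\wedge^2$ and $S^2$ since $\phi$ does not commute with the swap of $X$ and $Y$ --- which is why the paper needs the separate computation of Proposition \ref{prop:odduninilregularwedge} for those cases.
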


The Jordan normal form of the tensor product of unipotent matrices has been extensively studied in the literature \cite{Srinivasan}, \cite{Ralley}, \cite{McFall}, \cite{Renaud}, \cite{Norman}, \cite{Norman2}, \cite{Hou}, \cite{Barry}. There is no easy closed formula in positive characteristic, but there are various recursive formulae involving only calculations with the integers $d_i$ --- see for example \cite[Theorem 1]{Barry}. By Proposition \ref{prop:tensorproductcompare}, such formulae can also be used to calculate the Jordan normal form of $e$ on $W_i \otimes W_j$. With this, we have a complete method for calculating the Jordan normal form of $\D (\rho \otimes \rho^*)(e)$.


There are also recursive formulae for the Jordan normal form of $u$ on $\wedge^2(W_i)$ and $S^2(W_i)$, see \cite[Theorem 2]{Barry} for $p > 2$. By the next proposition, for $p > 2$ the Jordan block sizes of $u$ and $e$ are the same on $\wedge^2(W_i)$ and $S^2(W_i)$, so \cite[Theorem 2]{Barry} can be used to describe the Jordan normal form of $e$ on $\wedge^2(W_i)$ and $S^2(W_i)$. Hence we are able to calculate the Jordan normal form of $(\D \wedge^2(\rho))(e)$ and $(\D S^2(\rho))(e)$ when $p > 2$.

\begin{prop}[{\cite[Theorem 21]{McNinchAdjoint}}]\label{prop:odduninilregularwedge}
Assume that $p > 2$. Let $u \in \SL(V)$ be a regular unipotent element, and let $e \in \mathfrak{sl}(V)$ be a regular nilpotent element. Then $u$ and $e$ have the same Jordan block sizes on $\wedge^2(V)$ and $S^2(V)$.
\end{prop}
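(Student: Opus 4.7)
The plan is to use Proposition \ref{prop:tensorproductcompare} together with the decomposition $V \otimes V = \wedge^2(V) \oplus S^2(V)$ (valid since $p > 2$) and a one-parameter deformation argument to transfer the tensor-product comparison onto each summand separately.

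First I would reduce to the case $e = u - 1$: all regular nilpotent elements of $\mathfrak{sl}(V)$ are $\SL(V)$-conjugate, and conjugation preserves Jordan blocks on any $G$-module. On $V \otimes V$, set $X = e \otimes 1 + 1 \otimes e$ and $Y = e \otimes e$; these operators commute, and both preserve $\wedge^2(V)$ and $S^2(V)$. The expansion $u \otimes u = (1+e) \otimes (1+e) = 1 + X + Y$ shows that $u \otimes u$ and $X + Y$ have the same Jordan block sizes on $V \otimes V$, so Proposition \ref{prop:tensorproductcompare} becomes the assertion that $X$ and $X + Y$ do too.

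To detach this equality onto the two summands, I would use the one-parameter subgroup of $\GL(V)$ acting as $\lambda(t) v_i = t^i v_i$ in a basis $v_1, \ldots, v_n$ of $V$ with $e v_i = v_{i-1}$ (and $e v_1 = 0$). A direct computation gives $\lambda(t) e \lambda(t)^{-1} = t^{-1} e$, so conjugating $X + Y$ by $\lambda(t) \otimes \lambda(t)$ yields $t^{-1} X + t^{-2} Y = t^{-1}(X + t^{-1} Y)$, and this conjugation preserves both $\wedge^2(V)$ and $S^2(V)$. Since a nonzero scalar does not change Jordan block sizes of a nilpotent operator, $X + sY$ has the same Jordan type as $X + Y$ on each summand for every $s \in K \setminus \{0\}$.

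Finally, $s \mapsto \dim \ker (X + sY)^k$ is upper semicontinuous in $s$ (rank is lower semicontinuous on polynomial families of linear maps), so specializing to $s = 0$ yields $\dim \ker X^k|_W \geq \dim \ker (X + Y)^k|_W$ for $W = \wedge^2(V)$ and $W = S^2(V)$ and every $k \geq 1$. Adding these two inequalities and using the equality already established on $V \otimes V$ forces equality on each summand. On $W$, the restriction of $X$ is the standard action of $e$, while the restriction of $X + Y = u \otimes u - 1$ has the same Jordan blocks as the action of $u$, which gives the conclusion. The main subtlety I expect is the deformation/semicontinuity step; everything else is formal bookkeeping built on Proposition \ref{prop:tensorproductcompare}.
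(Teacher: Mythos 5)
Your argument is correct, but it takes a genuinely different route from the paper's. The paper handles $\wedge^2(V)$ by an explicit change of basis: it exhibits vectors $w_{i,j}' = u^{n-j}(v_i)\wedge v_j + v_i\wedge u^{n-i}(v_j)$ which form a basis of $\wedge^2(V)$ (this is where $p>2$ enters there, to see the transition matrix $w_{i,j}'=2w_{i,j}+(\text{lower terms})$ is invertible) and on which $u-1$ acts by exactly the same rule $w_{i,j}'\mapsto w_{i,j-1}'+w_{i-1,j}'$ as $e$ does on the standard basis $v_i\wedge v_j$; the $S^2(V)$ case is then deduced by subtracting from the $V\otimes V$ statement of Proposition \ref{prop:tensorproductcompare}, much as in your final step. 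You instead treat both summands symmetrically by a degeneration argument: writing $u\otimes u-1=X+Y$ with $X=e\otimes 1+1\otimes e$ and $Y=e\otimes e$ (commuting nilpotents, each preserving both summands), you use the grading cocharacter $\lambda(t)$ to show that the Jordan type of $X+sY$ on each summand is independent of $s\neq 0$, and then lower semicontinuity of rank at $s=0$, combined with the equality already known on all of $V\otimes V$, forces equality summand by summand. This is a clean trick: it avoids the basis computation entirely and makes transparent that the only real inputs are Proposition \ref{prop:tensorproductcompare} and the splitting $V\otimes V\cong\wedge^2(V)\oplus S^2(V)$, which is where your use of $p>2$ is concentrated (consistent with the failure of the statement in characteristic $2$). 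The price is the semicontinuity step and the need for infinitely many parameter values, harmless since $K$ is algebraically closed; the paper's computation, by contrast, is self-contained and produces an explicit near-Jordan basis for $u$ on $\wedge^2(V)$.
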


As an alternative to the more general setting of formal group laws taken in \cite{McNinchAdjoint}, Proposition \ref{prop:odduninilregularwedge} can also be proven by adapting the proof of Proposition \ref{prop:tensorproductcompare} given in \cite[Corollary 5 (a)]{NormanTwoRelated}.

\begin{remark}In characteristic $p = 2$, Proposition \ref{prop:odduninilregularwedge} fails. For example, let $\dim V = 4$ and consider a regular unipotent element $u \in \SL(V)$ and a regular nilpotent element $e \in \mathfrak{sl}(V)$. A computation shows that $\wedge^2(V) \downarrow K[u] = J_2(1) \oplus J_4(1)$ but $\wedge^2(V) \downarrow K[e] = J_3(0)^2$, and similarly $S^2(V) \downarrow K[u] = J_2(1) \oplus J_4(1)^2$ but $S^2(V) \downarrow K[e] = J_1(0)^2 \oplus J_4(0)^2$.\end{remark}

Summarizing the results from the literature described in this section, we get the following proposition from~\eqref{eq:vxv} -- \eqref{eq:symv} and Propositions \ref{prop:tensorproductcompare} -- \ref{prop:odduninilregularwedge}.


\begin{prop}[{\cite[Theorem 24]{McNinchAdjoint}}]\label{prop:nilunipcompare}
Let $u \in \SL(V)$ be a unipotent element, and let $e \in \mathfrak{sl}(V)$ be nilpotent with the same Jordan block sizes on $V$ as $u$. Then:

\begin{enumerate}[\normalfont (i)]
\item $u$ and $e$ have the same Jordan block sizes on $V \otimes V^*$.
\item If $p > 2$, then $u$ and $e$ have the same Jordan block sizes on $\wedge^2(V)$ and $S^2(V)$.
\end{enumerate} 
\end{prop}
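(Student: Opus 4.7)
The plan is to reduce the proposition to the single-Jordan-block cases already handled by Propositions \ref{prop:tensorproductcompare} and \ref{prop:odduninilregularwedge}. Since $u$ and $e$ have the same Jordan block sizes on $V$ by hypothesis, we may write $V = W_1 \oplus \cdots \oplus W_t$ as a direct sum of $e$-invariant subspaces with $W_i \downarrow K[e] = J_{d_i}(0)$, and independently $V = W_1' \oplus \cdots \oplus W_t'$ as a direct sum of $u$-invariant subspaces with $W_i' \downarrow K[u] = J_{d_i}(1)$. Any nilpotent or unipotent map and its transpose have the same Jordan block sizes, so analogous decompositions of $V^*$ exist for $e$ and for $u$. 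The key observation is that the multiset of Jordan block sizes of an operator on a direct sum is the concatenation of the multisets on the invariant summands, so it suffices to compare block sizes on each summand that appears in the decompositions \eqref{eq:vxv}, \eqref{eq:altv}, \eqref{eq:symv}.

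For part (i), applying \eqref{eq:vxv} in the form adapted to $V \otimes V^*$ decomposes that module under $e$ (respectively under $u$) into summands of the shape $W_i \otimes W_j^*$, where $e$ acts on $W_i$ as a regular nilpotent element and on $W_j^*$ as a regular nilpotent element (and similarly for $u$ as regular unipotent). Proposition \ref{prop:tensorproductcompare} says that on such a tensor product the multiset of Jordan block sizes depends only on $d_i$ and $d_j$, and in particular it is the same for regular unipotent input as for regular nilpotent input. Concatenating over all pairs $(i,j)$ gives (i).

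For part (ii), we assume $p > 2$ and split $\wedge^2(V)$ and $S^2(V)$ using \eqref{eq:altv} and \eqref{eq:symv}. The diagonal summands $\wedge^2(W_i)$ and $S^2(W_i)$ carry a regular nilpotent, respectively regular unipotent, action on a single Jordan block of size $d_i$, and Proposition \ref{prop:odduninilregularwedge} yields matching Jordan block sizes on these. For the off-diagonal summands $W_i \wedge W_j$ and $W_i W_j$ with $i<j$, the natural maps $w_i \otimes w_j \mapsto w_i \wedge w_j$ and $w_i \otimes w_j \mapsto w_i w_j$ are $K[e]$-equivariant and $K[u]$-equivariant isomorphisms from $W_i \otimes W_j$, as noted in the paragraph preceding Proposition \ref{prop:tensorproductcompare}, so the Jordan block sizes on them coincide with those on $W_i \otimes W_j$; Proposition \ref{prop:tensorproductcompare} then provides the desired match between $u$ and $e$. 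Summing over summands establishes (ii).

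There is no substantive obstacle: the proposition is essentially bookkeeping assembled from Propositions \ref{prop:tensorproductcompare} and \ref{prop:odduninilregularwedge} together with the $e$- and $u$-invariant decompositions of the tensor, exterior, and symmetric squares. The only point requiring a moment of care is to remark that $V^*$ inherits the common Jordan type of $V$ under both $e$ and $u$, so that the summand-by-summand comparison in (i) is legitimate.
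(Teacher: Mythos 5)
Your proposal is correct and follows exactly the route the paper intends: the paper gives no separate proof, stating only that the proposition is a consequence of the decompositions \eqref{eq:vxv}--\eqref{eq:symv} together with Propositions \ref{prop:tensorproductcompare} and \ref{prop:odduninilregularwedge}, and your argument is precisely the bookkeeping that assembles these ingredients. The points you single out for care (passing to $V^*$, and the equivariance of $W_i\otimes W_j\to W_i\wedge W_j$ and $W_i\otimes W_j\to W_iW_j$) are the same ones the paper records in the surrounding discussion.
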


\begin{remark}\label{remark:adjointmaps2}(cf. Remark \ref{remark:adjointmaps}) Suppose that $G$ is of classical type $B_n$, $C_n$, or $D_n$, and assume that $p > 2$. Using the methods described in this section, one can calculate the Jordan normal form of $\operatorname{ad}(e)$ and $\operatorname{Ad}(u)$ for all nilpotent $e \in \g$ and unipotent $u \in G$. 

Consider for example the case where $G$ is of type $B_n$ or $D_n$. Let $G_{sc} = \operatorname{Spin}(V)$ be the simply connected cover of $G$. In this case $\g$ is simple, and as $G_{sc}$-modules $\g \cong \wedge^2(V)$  \cite[1.14, 8.1 (a) -- (b)]{SeitzClassical}. Thus by using Proposition \ref{prop:nilunipcompare} (ii) and the results discussed in this section, we can describe the Jordan normal form of $\operatorname{ad}(e)$ and $\operatorname{Ad}(u)$ for all nilpotent $e \in \g$ and unipotent $u \in G$. In the case where $G$ is of type $C_n$, we can argue similarly, by considering $G_{sc} = \Sp(V)$ and using the fact that $\g \cong S^2(V)$ as $G_{sc}$-modules \cite[1.14, 8.1 (c)]{SeitzClassical}.\end{remark}


\section{Proof of main results}\label{section:proofsection}


In this section, we will prove our main results: Theorem \ref{thm:nilmainthmA} and Corollaries \ref{corollary:nilSPmain} - \ref{thm:nilmainthmCOR}. At the end we will also give a table of examples illustrating Theorem \ref{thm:nilmainthmA}.

We begin with some calculations concerning the action of a regular nilpotent element of $\SL(V)$ on $V \otimes V^*$. Fix a basis $v_1$, $\ldots$, $v_n$ of $V$, and let $v_1^*$, $\ldots$, $v_n^*$ be the corresponding dual basis of $V^*$, so $v_i^*(v_j) = \delta_{ij}$ for all $1 \leq i,j \leq n$. For convenience, we set $v_i = 0$ and $v_i^* = 0$ for all $i \leq 0$ and $i > n$. In the following lemmas we consider a regular nilpotent element $e \in \mathfrak{sl}(V)$ defined by \begin{equation}\label{eq:regularnilpaction}ev_i = v_{i-1}\end{equation} for all $1 \leq i \leq n$.

\begin{lemma}\label{lemma:nilpbinpower}
Let $k \geq 0$. Then $$e^k \cdot (v_i \otimes v_j^*) = \sum_{0 \leq t \leq k} \binom{k}{t} (-1)^{k+t} v_{i-t} \otimes v_{j+k-t}^*$$ for all $1 \leq i,j \leq n$.
\end{lemma}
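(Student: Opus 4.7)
The plan is to exploit the fact that the derived action of $e$ on $V \otimes V^*$ splits as a sum of two commuting operators. First I would identify the two pieces: writing $e_V$ for the action $v_i \mapsto v_{i-1}$ on $V$ and $e_{V^*}$ for the induced action on $V^*$, the Leibniz rule for the Lie algebra action on a tensor product gives $e = e_V \otimes 1 + 1 \otimes e_{V^*}$ on $V \otimes V^*$, and these two summands commute.

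Next I would write down the dual action explicitly. From the defining identity $(e_{V^*} \cdot v_j^*)(v_m) = -v_j^*(e_V v_m) = -v_j^*(v_{m-1}) = -\delta_{j+1,m}$ one gets $e_{V^*} \cdot v_j^* = -v_{j+1}^*$, and a trivial iteration then yields $e_{V^*}^s \cdot v_j^* = (-1)^s v_{j+s}^*$ for all $s \geq 0$, with the standing convention that $v_j^* = 0$ outside the range $1 \leq j \leq n$. On the $V$-side one of course has $e_V^s v_i = v_{i-s}$.

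Since $e_V \otimes 1$ and $1 \otimes e_{V^*}$ commute, the binomial theorem gives
\[
e^k \;=\; \sum_{t=0}^{k} \binom{k}{t}\, (e_V \otimes 1)^t \, (1 \otimes e_{V^*})^{k-t} \;=\; \sum_{t=0}^{k} \binom{k}{t}\, e_V^t \otimes e_{V^*}^{k-t}.
\]
Applying both sides to $v_i \otimes v_j^*$ and inserting the explicit formulas for $e_V^t v_i$ and $e_{V^*}^{k-t} v_j^*$ produces
\[
e^k \cdot (v_i \otimes v_j^*) \;=\; \sum_{t=0}^{k} \binom{k}{t}\, (-1)^{k-t}\, v_{i-t} \otimes v_{j+k-t}^*,
\]
and the claim follows from the identity $(-1)^{k-t} = (-1)^{k+t}$.

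The argument is routine; the only point of care is the sign in the dual action, and there is no real obstacle. Alternatively, one could proceed by induction on $k$, using Pascal's identity $\binom{k}{t-1} + \binom{k}{t} = \binom{k+1}{t}$ together with the single-step formula $e \cdot (v_a \otimes v_b^*) = v_{a-1} \otimes v_b^* - v_a \otimes v_{b+1}^*$ to pass from $k$ to $k+1$; this would give a slightly longer but equally straightforward proof.
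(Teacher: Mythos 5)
Your proof is correct and follows essentially the same route as the paper: writing the action of $e$ on $V \otimes V^*$ as a sum of two commuting operators and applying the binomial theorem. The paper leaves the sign computation for the dual action implicit ("from which the lemma follows"), whereas you spell it out; the details check out.
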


\begin{proof}The action of $e$ on $V \otimes V^*$ is given by $x \otimes \operatorname{id}_{V^*} + \operatorname{id}_V \otimes y$, where $x$ and $y$ are the actions of $e$ on $V$ and $V^*$, respectively. Now the lemma follows by computing the action of $e^k$ on $V \otimes V^*$ with the binomial theorem.\end{proof}

\begin{lemma}\label{lemma:betapowerzero}
Suppose that $p^{\beta} \mid n$ for some $\beta \geq 0$ and write $n = p^{\beta} k_{\beta}$. Set $$\delta_{\beta}' = \sum_{1 \leq j \leq k_\beta} v_{jp^{\beta}} \otimes v_{jp^{\beta}}^*.$$ Then $e^{p^{\beta}} \cdot \delta_{\beta}' = 0$.
\end{lemma}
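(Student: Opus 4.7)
The plan is to compute $e^{p^\beta} \cdot (v_i \otimes v_j^*)$ explicitly using Lemma~\ref{lemma:nilpbinpower} and then observe that the resulting expression for $e^{p^\beta} \cdot \delta_\beta'$ telescopes.

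First I would apply Lemma~\ref{lemma:nilpbinpower} with $k = p^\beta$, which gives
$$e^{p^\beta} \cdot (v_i \otimes v_j^*) = \sum_{0 \leq t \leq p^{\beta}} \binom{p^{\beta}}{t} (-1)^{p^{\beta}+t} v_{i-t} \otimes v_{j+p^{\beta}-t}^*.$$
Since $\binom{p^\beta}{t} \equiv 0 \pmod{p}$ for $0 < t < p^\beta$ (a consequence of Lemma~\ref{lemma:easybinomial}, or directly from Lucas), only the terms $t = 0$ and $t = p^\beta$ survive in characteristic $p$, giving the simplified formula
$$e^{p^{\beta}} \cdot (v_i \otimes v_j^*) = (-1)^{p^{\beta}} v_i \otimes v_{j+p^{\beta}}^* + v_{i - p^{\beta}} \otimes v_j^*.$$
In characteristic $p$, we have $(-1)^{p^{\beta}} = -1$ in either parity of $p$ (for $p$ odd this is clear, and for $p = 2$ we have $-1 = 1$ anyway), so the two surviving terms have opposite signs.

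Next I would substitute $i = j = mp^{\beta}$ for $1 \leq m \leq k_\beta$, obtaining
$$e^{p^{\beta}} \cdot (v_{mp^{\beta}} \otimes v_{mp^{\beta}}^*) = -v_{mp^{\beta}} \otimes v_{(m+1)p^{\beta}}^* + v_{(m-1)p^{\beta}} \otimes v_{mp^{\beta}}^*,$$
and sum over $1 \leq m \leq k_\beta$. After reindexing the second sum by $m \mapsto m+1$, the two sums cancel term-by-term for $1 \leq m \leq k_\beta - 1$. The remaining boundary contributions are $-v_{k_\beta p^{\beta}} \otimes v_{(k_\beta+1)p^{\beta}}^*$ and $v_0 \otimes v_{p^{\beta}}^*$, both of which vanish since $v_0 = 0$ and $(k_\beta+1)p^{\beta} > n$ forces $v_{(k_\beta+1)p^\beta}^* = 0$. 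Therefore $e^{p^\beta} \cdot \delta_\beta' = 0$.

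There is no real obstacle here; the argument is a direct computation whose only subtlety is the sign bookkeeping, which fortunately is uniform in $p$. (As a sanity check, the case $\beta = 0$ recovers the fact that $\delta_0' = \sum_j v_j \otimes v_j^*$ is annihilated by $e$, consistent with Lemma~\ref{lemma:typeAomegabasisV}.)
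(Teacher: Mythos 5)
Your proof is correct. It takes a genuinely different (more computational) route than the paper. You expand $e^{p^\beta}\cdot(v_i\otimes v_j^*)$ via Lemma~\ref{lemma:nilpbinpower}, kill the middle terms using $\binom{p^\beta}{t}\equiv 0 \pmod p$ for $0<t<p^\beta$ (which indeed follows from Lemma~\ref{lemma:easybinomial} by Pascal's rule, $\binom{p^\beta}{t}=\binom{p^\beta-1}{t}+\binom{p^\beta-1}{t-1}\equiv(-1)^t+(-1)^{t-1}=0$), and telescope; the sign bookkeeping and the boundary terms (handled by the paper's convention $v_i=v_i^*=0$ for $i\le 0$ and $i>n$) are all in order. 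The paper instead argues structurally: it takes $W$ to be the span of the $v_{jp^\beta}$, notes that $e^{p^\beta}$ preserves $W$ and acts there as a nilpotent (hence trace-zero) endomorphism, and then applies Lemma~\ref{lemma:typeAomegabasisV} to conclude that $\delta_\beta'$, being the spanning vector of the unique $1$-dimensional $\SL(W)$-submodule of $W\otimes W^*$, is annihilated by $\mathfrak{sl}(W)$. That argument is shorter and explains the identity conceptually (it is the $\beta=0$ case transported along the $p^\beta$-th power map), but it quietly relies on $(x\otimes 1+1\otimes y)^{p^\beta}=x^{p^\beta}\otimes 1+1\otimes y^{p^\beta}$ so that $e^{p^\beta}$ really does act on $W\otimes W^*$ as a derivation; your computation makes every step explicit at the cost of a little index-chasing. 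Both are complete proofs.
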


\begin{proof}Let $W$ be the subspace of $V$ spanned by $v_{jp^{\beta}}$ for all $1 \leq j \leq k_\beta$. It is clear that $W$ is invariant under the action of $e^{p^{\beta}}$. On the other hand, by Lemma \ref{lemma:typeAomegabasisV} the vector $\delta_{\beta}'$ is annihilated by the action of $\mathfrak{sl}(W)$, so we conclude that $e^{p^{\beta}} \cdot \delta_{\beta}' = 0$.\end{proof}

\begin{lemma}\label{lemma:betapowerm1}
Suppose that $p^{\beta} \mid n$ for some $\beta \geq 0$ and write $n = p^{\beta} k_{\beta}$. Set $$\delta_{\beta} = \sum_{0 \leq j \leq k_{\beta} - 1} v_{(j+1)p^{\beta}} \otimes v_{jp^{\beta}+1}^*.$$ Then $e^{p^{\beta}-1} \cdot \delta_\beta = \sum_{1 \leq t \leq n} v_t \otimes v_t^*$.
\end{lemma}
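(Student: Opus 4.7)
The plan is to apply Lemma \ref{lemma:nilpbinpower} with $k = p^{\beta}-1$ to each summand of $\delta_{\beta}$, simplify the resulting binomial sum using Lemma \ref{lemma:easybinomial}, and then observe that the dust settles into diagonal terms $v_s \otimes v_s^*$.

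First I would write, by Lemma \ref{lemma:nilpbinpower},
\begin{equation*}
e^{p^{\beta}-1} \cdot (v_i \otimes v_j^*) \;=\; \sum_{0 \leq t \leq p^{\beta}-1} \binom{p^{\beta}-1}{t} (-1)^{p^{\beta}-1+t}\, v_{i-t} \otimes v_{j+p^{\beta}-1-t}^*.
\end{equation*}
By Lemma \ref{lemma:easybinomial}, $\binom{p^{\beta}-1}{t} \equiv (-1)^t \pmod{p}$, so the scalar in front of each term equals $(-1)^{p^{\beta}-1+2t} = (-1)^{p^{\beta}-1}$ in $K$. For $p$ odd this is $1$ since $p^{\beta}-1$ is even, and for $p=2$ we have $-1 = 1$ in $K$; in either case the coefficient is just $1$. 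Hence
\begin{equation*}
e^{p^{\beta}-1} \cdot (v_i \otimes v_j^*) \;=\; \sum_{0 \leq t \leq p^{\beta}-1} v_{i-t} \otimes v_{j+p^{\beta}-1-t}^*.
\end{equation*}

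Next I would specialize to $(i,j) = ((j+1)p^{\beta},\, jp^{\beta}+1)$ for a fixed $0 \leq j \leq k_{\beta}-1$. Then the index on $V^*$ becomes $jp^{\beta}+1+p^{\beta}-1-t = (j+1)p^{\beta}-t$, which matches the index on $V$. Substituting $s = (j+1)p^{\beta}-t$, the variable $s$ runs over $\{jp^{\beta}+1,\, jp^{\beta}+2,\, \ldots,\, (j+1)p^{\beta}\}$ as $t$ runs from $0$ to $p^{\beta}-1$, giving
\begin{equation*}
e^{p^{\beta}-1} \cdot (v_{(j+1)p^{\beta}} \otimes v_{jp^{\beta}+1}^*) \;=\; \sum_{s = jp^{\beta}+1}^{(j+1)p^{\beta}} v_s \otimes v_s^*.
\end{equation*}

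Finally, summing over $0 \leq j \leq k_{\beta}-1$, the intervals $\{jp^{\beta}+1, \ldots, (j+1)p^{\beta}\}$ partition $\{1, \ldots, k_{\beta}p^{\beta}\} = \{1, \ldots, n\}$, so
\begin{equation*}
e^{p^{\beta}-1} \cdot \delta_{\beta} \;=\; \sum_{t=1}^{n} v_t \otimes v_t^*,
\end{equation*}
as claimed. There is no real obstacle here; the content is the binomial simplification via Lemma \ref{lemma:easybinomial} and the bookkeeping that the indices on $V$ and $V^*$ coincide precisely because the starting indices $(j+1)p^{\beta}$ and $jp^{\beta}+1$ differ by $p^{\beta}-1$, matching the number of times $e$ is applied.
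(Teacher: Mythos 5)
Your proof is correct and follows essentially the same route as the paper: apply Lemma \ref{lemma:nilpbinpower} with $k = p^{\beta}-1$, use Lemma \ref{lemma:easybinomial} to see that every coefficient is $1$, observe that the $V$- and $V^*$-indices coincide, and sum the resulting diagonal blocks over $j$. The only difference is that you spell out the sign bookkeeping and the reindexing slightly more explicitly than the paper does.
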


\begin{proof}
By Lemma \ref{lemma:nilpbinpower} and Lemma \ref{lemma:easybinomial}, we have 
\begin{align*} 
e^{p^{\beta}-1} \cdot (v_{(j+1)p^{\beta}} \otimes v_{jp^{\beta}+1}^*) &= \sum_{0 \leq t \leq p^{\beta}-1} \binom{p^{\beta}-1}{t} (-1)^{p^{\beta}-1+t} v_{(j+1)p^{\beta} - t} \otimes v_{(j+1)p^{\beta} - t}^* \\ 
&= \sum_{jp^{\beta}+1 \leq t \leq (j+1)p^{\beta}} v_t \otimes v_t^* \end{align*} 
for all $0 \leq j \leq k_{\beta}-1$, from which the lemma follows.
\end{proof}


\begin{proof}[Proof of Theorem \ref{thm:nilmainthmA}]We use a strategy similar to the proof of \cite[Theorem 6.1]{KorhonenJordanGood}. First we note that if $p \nmid n$, then $\mathfrak{gl}(V) = \mathfrak{sl}(V) \oplus Z(\mathfrak{gl}(V))$ and in particular $\mathfrak{psl}(V) \cong \mathfrak{sl}(V)$ as $G$-modules. In this case $e_0'$ and $e_0''$ have the same Jordan block sizes, and statements (i) -- (v) of the theorem are clear.

Suppose then that $p \mid n$. Write $V = W_{1} \oplus \cdots \oplus W_{t}$, where each $W_{i}$ is $e$-invariant and $W_{i} \downarrow K[e] = J_{d_i}(0)$. For $1 \leq r \leq t$, let $(v_j^{(r)})_{1 \leq j \leq d_r}$ be a basis of $W_{r}$, such that $ev_j^{(r)} = v_{j-1}^{(r)}$ for all $1 \leq j \leq d_r$, where we define $v_{0}^{(r)} = 0$. Let $(v_j^{(r)^*})$ be the dual basis of $V^*$ corresponding to the basis $(v_j^{(r)})$ of $V$. For $1 \leq r \leq t$, we define $$\gamma_r = \sum_{1 \leq j \leq d_r} v_j^{(r)} \otimes  v_j^{(r)^*}.$$ By Lemma \ref{lemma:typeAomegabasisV}, the element $\gamma = \sum_{1 \leq r \leq t} \gamma_r$ spans the unique $1$-dimensional $G$-submodule of $V \otimes V^*$. 

Throughout the proof we will identify $\mathfrak{gl}(V) \cong V \otimes V^*$. Under this isomorphism, the $G$-submodule corresponding to $\mathfrak{sl}(V)$ is $\operatorname{Ker} \varphi$, where $\varphi: V \otimes V^* \rightarrow K$ is defined by $\varphi(v \otimes f) = f(v)$ for all $v \in V$ and $f \in V^*$. Furthermore, the $G$-module corresponding to $\mathfrak{psl}(V)$ is $\operatorname{Ker} \varphi / \langle \gamma \rangle$, here $\gamma \in \operatorname{Ker} \varphi$ since $p \mid n$. Then $e_0$, $e_0'$, and $e_0''$ are the actions of $e$ on $V \otimes V^*$, $\operatorname{Ker} \varphi$, and $\operatorname{Ker} \varphi / \langle \gamma \rangle$, respectively.

Next we will apply Lemma \ref{jordanrestrictionNIL} to describe the Jordan block sizes of $e_0'$ and $e_0''$. By Proposition \ref{prop:nilunipcompare} (i) and \cite[Lemma 4.3]{KorhonenJordanGood}, the smallest Jordan block size of $e_0$ is $p^{\alpha}$. Thus \begin{equation}\label{eq:nilpminker1}\operatorname{Ker} e_0^{p^{\alpha}-1} \subset \operatorname{Ker} \varphi\end{equation} by Lemma \ref{jordanrestrictionNIL}. We will show that \begin{equation}\label{eq:nilpminker2}\operatorname{Ker} e_0^{p^{\alpha}} \not\subset \operatorname{Ker} \varphi,\end{equation} which together with~\eqref{eq:nilpminker1} and Lemma \ref{jordanrestrictionNIL} proves that the Jordan block sizes of $e_0'$ are as described in (i) -- (ii) of the theorem.

Let $1 \leq r' \leq t$ be such that $\nu_p(d_{r'}) = \alpha$, and write $d_{r'} = p^{\alpha}k_{r'}$. Consider $$v = \sum_{1 \leq j \leq k_{r'}} v_{jp^{\alpha}}^{(r')} \otimes v_{jp^{\alpha}}^{(r')^*}.$$ We have $e^{p^{\alpha}} \cdot v = 0$ by Lemma \ref{lemma:betapowerzero}, and $v \not\in \operatorname{Ker} \varphi$ since $\varphi(v) = k_{r'}$. This proves~\eqref{eq:nilpminker2}.


Next we consider the Jordan block sizes of $e_0''$. Let $f$ be the action of $e$ on $(V \otimes V^*) / \langle \gamma \rangle$. By Lemma \ref{lemma:typeAomega} $(\operatorname{Ker} \varphi)^* \cong (V \otimes V^*) / \langle \gamma \rangle$ as $G$-modules, so the Jordan block sizes of $f$ are the same as those of $e_0'$. Thus to prove that the Jordan block sizes of $e_0''$ are as described in (iv) -- (v) of the theorem, by Lemma \ref{jordanrestrictionNIL} it will suffice to show that \begin{align}\label{eq:fineqpalpham1} \operatorname{Ker} f^{p^{\alpha}-1} &\subset \operatorname{Ker} \varphi / \langle \gamma \rangle, \\ \label{eq:fineqpalpha} \operatorname{Ker} f^{p^{\alpha}} &\not\subset \operatorname{Ker} \varphi / \langle \gamma \rangle \end{align} hold. First we note that~\eqref{eq:fineqpalpha} is immediate from~\eqref{eq:nilpminker2}. For~\eqref{eq:fineqpalpham1}, for all $1 \leq r \leq t$ write $d_r = p^{\alpha} k_r$ and let $$\delta_r = \sum_{0 \leq j \leq k_r - 1} v_{(j+1)p^{\alpha}}^{(r)} \otimes v_{jp^{\alpha}+1}^{(r)^*}.$$ Since the action of $e$ on $W_r$ is as defined in~\eqref{eq:regularnilpaction}, it follows from Lemma \ref{lemma:betapowerm1} that $e^{p^{\alpha}-1} \cdot \delta_r = \gamma_r$. Hence for $\delta = \sum_{1 \leq r \leq t} \delta_r$, we have $e^{p^{\alpha}-1} \cdot \delta = \gamma$, so $\operatorname{Ker} f^{p^{\alpha}-1} = \left( \operatorname{Ker} e_0^{p^{\alpha}-1} + \langle \delta \rangle \right) / \langle \gamma \rangle.$ Furthermore $\delta \in \operatorname{Ker} \varphi$, so we conclude from~\eqref{eq:nilpminker1} that~\eqref{eq:fineqpalpham1} holds, which completes the proof of the theorem.\end{proof}

\begin{proof}[Proof of Corollary \ref{corollary:nilSPmain}]Similarly to the proof of \cite[Corollary 6.2]{KorhonenJordanGood}, with Theorem \ref{thm:nilmainthmA} the corollary follows from the isomorphisms $V \otimes V^* \cong \wedge^2(V) \oplus S^2(V)$ and $L_{\SL(V)}(\varpi_1 + \varpi_{n-1}) \cong L_G(\varpi_2) \oplus S^2(V)$ of $G$-modules.\end{proof}

\begin{proof}[Proof of Corollary \ref{corollary:nilSOmain}]As in \cite[Corollary 6.3]{KorhonenJordanGood}, the corollary follows using Theorem \ref{thm:nilmainthmA} and the fact that we have isomorphisms $V \otimes V^* \cong \wedge^2(V) \oplus S^2(V)$ and $L_{\SL(V)}(\varpi_1 + \varpi_{n-1}) \cong \wedge^2(V) \oplus L_G(2 \varpi_1)$ of $G$-modules.\end{proof}

\begin{proof}[Proof of Corollary \ref{thm:nilmainthmCOR}]Since $V \otimes V^* \cong \mathfrak{gl}(V)$, the claim in (i) about $\mathfrak{gl}(V)$ follows from Proposition \ref{prop:nilunipcompare} (i). The Jordan block sizes of the action of $u_0$ on $\mathfrak{sl}(V)$ are not explicitly stated in \cite[Theorem 6.1]{KorhonenJordanGood}, but are described in its proof. Indeed, on \cite[p. 4215]{KorhonenJordanGood} the action of $u$ on $\mathfrak{gl}(V)$ corresponds to $u_0$, while the action of $u$ on $\mathfrak{pgl}(V) = \mathfrak{gl}(V) / Z(\mathfrak{gl}(V))$ corresponds to $u_0'$. The arguments on \cite[p. 4215]{KorhonenJordanGood} show that the values of $r_m(u_0')$ are given exactly by the rules (i) -- (ii) of Theorem \ref{thm:nilmainthmA}. Since $\mathfrak{pgl}(V) \cong \mathfrak{sl}(V)^*$ as $G$-modules, we conclude that $u$ and $e$ have the same Jordan block sizes in their action on $\mathfrak{sl}(V)$.

Since $L_G(\varpi_1 + \varpi_{n-1}) \cong \mathfrak{psl}(V)$ as $G$-modules, claim (ii) follows easily by comparing Theorem \ref{thm:nilmainthmA} (iii) -- (v) and \cite[Theorem 6.1 (i) -- (iii)]{KorhonenJordanGood}.\end{proof}

\begin{esim}\label{esim:tablexample}Let $G = \SL(V)$ with $\dim V = n$ for $n \geq 2$. In Table \ref{table:examplesNIL} below, we give for all $2 \leq n \leq 5$ and all nilpotent elements $e \in \g = \mathfrak{sl}(V)$ the Jordan normal form of the action of $e$ on $V \otimes V^*$ and $L_G(\varpi_1 + \varpi_{n-1})$, in the case where $p \mid n$. In the table, we use notation $d_1^{n_1}, \ldots, d_t^{n_t}$ to denote $J_{d_1}(0)^{n_1} \oplus \cdots \oplus J_{d_t}(0)^{n_t}$, where $0 < d_1 < \cdots < d_t$ and $n_i \geq 1$ for all $1 \leq i \leq t$.\end{esim}

\begin{table}[!htbp]
\centering
\footnotesize
\caption{Theorem \ref{thm:nilmainthmA} in the cases $2 \leq n \leq 5$, see Example \ref{esim:tablexample}.}\label{table:examplesNIL}

\begin{tabular}{| c | l | l | l |}
\hline
$G$              & $V \downarrow K[e]$       & $V \otimes V^* \downarrow K[e]$         & $L_G(\varpi_1 + \varpi_{n-1}) \downarrow K[e]$ \\ \hline
$n = 2$, $p = 2$ & $2$ & $2^2$ & $1^2$ \\
                 & $1^2$ & $1^4$ & $1^2$ \\
& & & \\
$n = 3$, $p = 3$ & $3$        & $3^{3}$               & $2^{2}, 3$ \\
                 & $1, 2$     & $1^{2}, 2^{2}, 3$     & $2^{2}, 3$ \\
                 & $1^{3}$    & $1^{9}$               & $1^{7}$ \\
								 & & & \\
$n = 4$, $p = 2$ & $4$        & $4^{4}$               & $3^{2}, 4^{2}$ \\
                 & $1, 3$     & $1^{2}, 3^{2}, 4^{2}$ & $3^{2}, 4^{2}$ \\
                 & $2^{2}$    & $2^{8}$               & $1^{2}, 2^{6}$ \\
                 & $1^{2}, 2$ & $1^{4}, 2^{6}$        & $1^{2}, 2^{6}$ \\
                 & $1^{4}$    & $1^{16}$              & $1^{14}$ \\
								 & & & \\
$n = 5$, $p = 5$ & $5$        & $5^{5}$ & $4^2, 5^{3}$ \\
                 & $1, 4$     & $1^{2}, 4^{2}, 5^{3}$ & $4^{2}, 5^{3}$ \\
                 & $2, 3$     & $1^{2}, 2^{2}, 3^{2}, 4^{2}, 5$ & $2^{2}, 3^{2}, 4^{2}, 5$ \\
                 & $1^{2}, 3$ & $1^{5}, 3^{5}, 5$ & $1^{3}, 3^{5}, 5$ \\
                 & $1, 2^{2}$ & $1^{5}, 2^{4}, 3^{4}$ & $1^{3}, 2^{4}, 3^{4}$ \\
                 & $1^{3}, 2$ & $1^{10}, 2^{6}, 3$ & $1^{8}, 2^{6}, 3$ \\
                 & $1^{5}$    & $1^{25}$ & $1^{23}$ \\
\hline
\end{tabular}

\end{table}


\end{document}